\newtheorem{theorem}{Theorem}
\newtheorem{lemma}[theorem]{Lemma}
\newtheorem{proposition}[theorem]{Proposition}
\newcommand{\N}{\mathbb{N}}
\newcommand{\derivada}{\,^{\bf H}\mathbf{D}_{0+}^{\alpha,\beta;\psi}}
\newcommand{\derivadaCT}{\,^{\bf H}\mathbf{D}_{T}^{\alpha,\beta;\psi}}
\newcommand{\hzero}{\mathbb{H}^{\alpha,\beta;\psi}_{\mathcal{H},0}}
\newcommand{\norma}[3][2]{\lVert #1\rVert^{#2}_{#3}}
\numberwithin{theorem}{section} \numberwithin{equation}{section}
\title[Existence of solutions for  a singular...]{Existence of solutions for  a singular double phase problem involving a $\psi$-Hilfer Fractional operator via Nehari Manifold}
\author[J. Vanterler da C. Sousa $^{*}$, Karla B. Lima, Leandro S. Tavares]{J. Vanterler da C. Sousa $^{*}$, Karla B. Lima,  Leandro S. Tavares}
\address[J. Vanterler da C. Sousa]{Center for Mathematics, Computing and Cognition, Federal University of ABC, Avenida dos Estados, 5001, Bairro Bangu, 09.210-580, Santo André, SP - Brazil}
\email{\tt jose.vanterler@ufabc.edu.br}
\address[Karla B. Lima]{Faculty of Exact Sciences and Technology, FACET, UFGD, Dourados, MS 79804-970, Brazil}
\email{\tt karlalima@ufgd.edu.br}
\address[Leandro S. Tavares]{Center of  Sciences and  Technology, Federal University of Cariri, Juazeiro do Norte, Brazil;}
\email{\tt leandro.tavares@ufca.edu.br}
\subjclass[2010]{26A33,35R11,35A15,35J66,35J92,35P30.\\$^{*}$Correspondent author.}
\keywords{ $\psi$-Hilfer fractional operator, Fractional Differential Equations, Double phase operator, fibering method, multiple solutions, Nehari manifold, singular problem}
\begin{document}
 \begin{abstract} 
\textcolor{blue}{ In this present paper, we investigate a new class of singular double phase $p$-Laplacian equation problems with a $\psi$-Hilfer fractional operator combined from a parametric term. Motivated by the fibering method using the Nehari manifold, we discuss the existence of at least two weak solutions to such problems when the parameter is small enough. Before attacking the main contribution, we discuss some results involving the energy functional and the Nehari manifold.}

\end{abstract}
\maketitle

\section{Introduction and motivation}

\textcolor{blue}{This paper is concerned with the fractional singular double phase problem
\begin{equation}\label{eq1}
\left\{ 
\begin{array}{ccc}
{\bf L}^{\alpha,\beta}_{\psi} u & = & a(x)u^{-\gamma}+\lambda u^{r-1}, \, \mbox{ in }\Omega=[0,T]\times[0,T] \\ 
u & =&0, \mbox{ on }\partial \Omega    
\end{array}%
\right.
\end{equation}
with 
\begin{equation*}
    {\bf L}^{\alpha,\beta}_{\psi}u:=\derivadaCT\left(\arrowvert\derivada u\arrowvert^{p-2}\,\derivada  u+ \mu(x)\arrowvert\derivada u\arrowvert^{q-2}\,\derivada u\right),
\end{equation*}
where $\derivadaCT(\cdot)$ and $\derivada(\cdot)$ denotes the  $\psi$-Hilfer fractional operator of order $\dfrac{1}{p}<\alpha<1$ and type $0\leq \beta\leq 1$ respectively and}
\begin{itemize}
    \item[\textbf{(H)}]
    \begin{itemize}
    \item[(i)] \textcolor{blue}{ $1<p<2$, $p<q<p^{*}_{\alpha}=\dfrac{2p}{2-\alpha p}$ and $0\leq \mu(\cdot)\in L^{\infty}(\Omega)$;}
    
    \item[(ii)] $0<\gamma<1$ and $q<r<p_{\alpha}^*$;
    \item[(iii)]$a\in L^\infty(\Omega)$ and $a(x)> 0$ for a.a. $x\in\Omega$, with $a\neq 0$.
\end{itemize}
\end{itemize}

Partial differential equations involving double phase operators arise in the study of  the  behaviour of strongly anisotropic materials, see for instance Zhikov \cite{Zhikov}. Zhikov found that the hardening properties of such materials can change drastically, which is known in the literature as the Lavrentiev’s phenomenon, see \cite{1Zhikov,2Zhikov,3Zhikov} for more details. In order to describe such phenomenon, in \cite{Zhikov} it was introduced the energy functional defined in a suitable Sobolev space, which can change its ellipticity, given by  
\begin{equation}\label{funmin}
\int_{\Omega} |\nabla u |^{p}  + a(x) |\nabla u |^{q} dx,
\end{equation}
where $\Omega \subset \mathbb{R}^N$ is a domain and $p,q >1$. The function $a$  is related to  mixture between two different materials with power hardening of rates $p$ and $q$, respectively. In the recent papers \cite{Baroni,1Baroni,Colombo,Colombo1} it was obtained regularity results for minimizers of the functional \eqref{funmin} with $q >p$ and  $a \geq 0.$

In the  recent years, there has been an increasing interest  in the study of variational problems with double phase operators, see for instance  \cite{Liu,Ok,Papageorgiou} and the references therein. For example  Liu and Dai in \cite{Liu} discussed some existence and multiplicity results for the problem 
\begin{equation*}
	\left\{
	\begin{array}{rcl}
-div( |\nabla u |^{p-2} \nabla u+ a(x) |\nabla u |^{q-2} \nabla u)
		&=&f(x,u) \mbox{ in } \Omega\mbox{,}\\
		u & = & 0\;\;\mbox{on}\;\;\partial \Omega ,
	\end{array}%
	\right. 
\end{equation*}
where $\Omega\subset \mathbb{R}^{N}$ is a bounded domain with smooth boundary, $N\geq 2$, $1<p<q<N$, $\frac{q}{p}<1+\frac{1}{N},\,a:\overline{\Omega}\rightarrow [0,\infty)$ is Lipschitz continuous and $f$ satisfies some conditions. \textcolor{blue}{ See also the work discussed by Ragusa and Tachikawa \cite{Ragusa}, on regularity for minimizers for functionals of double phase with variable exponents. }

\textcolor{blue}{In Papageorgiou et al. \cite{Papageorgiou} it was investigated the existence of positive solutions for the singular  double phase problem
\begin{equation*}
	\left\{
	\begin{array}{rcl}
	-\Delta_{p} u -div( \xi(z) |\nabla u |^{p-2} \nabla u) 
		&=&a(z) u^{-\gamma}+\lambda u^{r-1} \mbox{ in } \Omega\mbox{,}\\
		u & = & 0\;\;\mbox{on}\;\;\partial \Omega ,
	\end{array}%
	\right. 
\end{equation*}
where $\Omega\subset \mathbb{R}^{N}$ is a bounded domain with smooth boundary, $N \geq 2, $ $\xi\in L^{\infty}(\Omega)$, $a\in L^{\infty}$, $a\geq 0$, $\xi(z)>0$, $u\neq 0$, $1<q<p<r<p^{*}$, $0<\gamma<1$, $u\geq 0$ and $\lambda>0$.}

Cui and Sun in \cite{Cui}  considered the existence and multiplicity of solutions for the  blue problem with nonlinear boundary conditions given by 
\begin{equation*}
	\left\{
	\begin{array}{rcl}
	-div(|\nabla u |^{p-2} \nabla u+ \mu(x) |\nabla u |^{q-2} |\nabla| u)
		&=&f(x,u)-|u|^{p-2}u-\mu(x) |u|^{q-2} u \mbox{ in } \Omega\mbox{,}\\
		\left(|\nabla u |^{p-2} \nabla u + \mu(x) |\nabla u |^{q-2} |\nabla|u\right).\nu & = & g(x,u)\;\;\mbox{on}\;\;\partial \Omega ,
	\end{array}%
	\right. 
\end{equation*}
where $\Omega\subset \mathbb{R}^{N}$ is bounded with smooth boundary, $N\geq 2$, $1<p<q<N$, $\mu \in L^{\infty}(\Omega)$ with $\mu(x)\geq 0$ a.e. $x\in\Omega$, $\nu$ denotes the outer unit normal of $\Omega$ at the point $x\in\partial\Omega$, $f:\Omega\times\mathbb{R}\rightarrow\mathbb{R}$ and $g:\partial\Omega\times\mathbb{R}\rightarrow\mathbb{R}$ are Caratheodory functions. \textcolor{blue}{ In addition, it is worth mentioning another important work on the eigenvalue problem in the double phase variational variation structure
integrals performed by Colasuonno and Squassina \cite{Colasuonno}. On the other hand, Colombo and Giuseppe \cite{Colombo} did an excellent work on bounded minimisers of double phase variational integrals. For other interesting papers which considers  double phase problems we mention \cite{Bahrouni,Gasinski,Lei,Wulong} and the references therein.}

On the other hand, the theory of fractional differential equations is well consolidated due  to  important results, which can be found  for example in  \cite{Kilbas,Sousa,Vangipuram}. \textcolor{red}{An important fact is that  Fractional Differential Equations can be applied to study several models  in Medicine, Physics, Engineering, Mechanics and are interesting from the mathematical viewpoint,  see for example  \cite{Machado,Nemati,Silva1,Sousa22,You}}. For example in the recent paper    \cite{Sousa}, the authors introduced the $\psi$-Hilfer fractional operator and exhibited an wide class of examples. We also quote the reference  \cite{Sousa8}, where  it was considered  the construction of the space $\mathbb{H}^{\alpha,\beta;\psi}_{p}([0,T])$  which allowed to consider the variational approach for problems involving  $\psi$-Hilfer fractional operators.  

\textcolor{blue}{ In 2021 Suwan et al. \cite{Suwan}, obtained results of great importance in the field of differential equations with respect to the $\psi$-Caputo fractional operator. Through Dhage's fixed-point theory, they obtained the existence of a solution for the proposed problem. In this sense, in order to elucidate the results investigated, examples were discussed.}

 The pioneering work that addresses variational problems with $\psi$-Hilfer fractional operators via the Nehari manifold method is \cite{Sousa7}, whose authors    considered the  problem
\begin{equation}\label{nvm}
	\left\{
	\begin{array}{rcl}
	^{\bf H}{\bf D}_{T}^{\nu ,\eta ;\psi }\left( \left\vert ^{\bf H}{\bf D}_{0+}^{\nu ,\eta ;\psi }\xi\right\vert ^{p-2}\text{ }^{\bf H}{\bf D}_{0+}^{\nu ,\eta ;\psi }\xi\right)
		&=&\lambda |\xi|^{p-2} \xi+b(x) |\xi|^{q-1} \xi \mbox{ in } [0,T]\mbox{,}\\
	{\bf I}_{0+}^{\eta (\eta -1);\psi }\xi(0)  & = & {\bf I}_{T}^{\eta (\eta -1);\psi }\xi(T)=0 ,
	\end{array}%
	\right. 
\end{equation}
where $^{\bf H}{\bf D}_{T}^{\nu ,\eta ;\psi }(\cdot)$, $^{\bf H}{\bf D}_{0+}^{\nu ,\eta ;\psi } (\cdot)$ are the left-sided and right-sided $\psi$-Hilfer fractional operators  of order $\frac{1}{p}<\nu<1$ and type $\eta$ $(0\leq \eta\leq1)$ respectively, $1<q<p-1<\infty$, $b\in L^{\infty}(\Omega)$ and ${\bf I}_{0+}^{\eta (\eta -1);\psi }(\cdot)$, ${\bf I}_{T}^{\eta (\eta -1);\psi }(\cdot)$ are $\psi$-Riemann-Liouville fractional integrals of order $\eta (\eta -1)$ $(0\leq \eta\leq 1)$. In the reference \cite{Sousa6} it was discussed necessary and sufficient conditions for Eq.(\ref{nvm}) and investigated the bifurcation of solutions through the Nehari manifold technique. \textcolor{blue}{Several researchers have been working with variational problems involving fractional operators, such as: Sousa, Tavares, Ledesma, Pulido, Oliveira \cite{Sousa8,Sousa5,Sousa4}; Nyamoradi and Tayyebi \cite{Nyamoradi}, Ghanmi and Z. Zhang \cite{Ghanmi} and other researchers as per the works \cite{Jiao,Zhang,Zhao} and the references therein.}

\textcolor{blue}{Let $\theta=(\theta_{1},\theta_{2},\theta_{3})$, $T=(T_{1},T_{2},T_{3})$ and $\alpha=(\alpha_{1},\alpha_{2},\alpha_{3})$ where $0<\alpha_{1},\alpha_{2},\alpha_{3}<1$ with $\theta_{j}<T_{j}$, for all $j\in \left\{1,2,3 \right\}$. Also put $\Lambda=I_{1}\times I_{2}\times \times I_{3}=[\theta_{1},T_{1}]\times [\theta_{2},T_{2}]\times [\theta_{3},T_{3}]$, where $T_{1},T_{2},T_{3}$ and $\theta_{1},\theta_{2},\theta_{3}$ positive constants. Consider also $\psi(\cdot)$ be an increasing and positive monotone function on $(\theta_{1},T_{1}),(\theta_{2},T_{2}),(\theta_{3},T_{3})$, having a continuous derivative $\psi'(\cdot)$ on $(\theta_{1},T_{1}],(\theta_{2},T_{2}],(\theta_{3},T_{3}]$. The $\psi$-Riemann-Liouville fractional partial integrals of $u\in \mathscr{L}^{1}(\Lambda)$ of order $\alpha$ $(0<\alpha<1)$ are given by \cite{Srivastava}}
\begin{itemize}
    \item \textcolor{blue}{ 1-variable: right and left-sided
\begin{equation*}
    {\bf I}^{\alpha,\psi}_{\theta_{1}} u(x_{1})=\dfrac{1}{\Gamma(\alpha_{1})} \int_{\theta_{1}}^{x_{1}} \psi'(s_{1})(\psi(x_{1})- \psi(s_{1}))^{\alpha_{1}-1} u(s_{1}) ds_{1},\,\,to\,\,\theta_{1}<s_{1}<x_{1}
\end{equation*}
and
\begin{equation*}
    {\bf I}^{\alpha,\psi}_{T_{1}} u(x_{1})=\dfrac{1}{\Gamma(\alpha_{1})} \int_{x_{1}}^{T_{1}} \psi'(s_{1})(\psi(s_{1})- \psi(x_{1}))^{\alpha_{1}-1} u(s_{1}) ds_{1},\,\,to\,\,x_{1}<s_{1}<T_{1},
\end{equation*}
with $x_{1}\in[\theta_{1},T_{1}]$, respectively.}
    
\item \textcolor{blue}{ 3-variables: right and left-sided
\begin{eqnarray*}
    {\bf I}^{\alpha,\psi}_{\theta} u(x_{1},x_{2},x_{3})=\dfrac{1}{\Gamma(\alpha_{1})\Gamma(\alpha_{2})\Gamma(\alpha_{3})} \int_{\theta_{1}}^{x_{1}}
    \int_{\theta_{2}}^{x_{2}}
    \int_{\theta_{3}}^{x_{3}}
    \psi'(s_{1})\psi'(s_{2})\psi'(s_{3})
    (\psi(x_{1})- \psi(s_{1}))^{\alpha_{1}-1}\notag\\
    \times
    (\psi(x_{2})- \psi(s_{2}))^{\alpha_{2}-1}
    (\psi(x_{3})- \psi(s_{3}))^{\alpha_{3}-1}
    u(s_{1},s_{2},s_{3}) ds_{3}ds_{2}ds_{1},
\end{eqnarray*}
to $\theta_{1}<s_{1}<x_{1}, \theta_{2}<s_{2}<x_{2}, \theta_{3}<s_{3}<x_{3}$ and
\begin{eqnarray*}
    {\bf I}^{\alpha,\psi}_{T} u(x_{1},x_{2},x_{3})=\dfrac{1}{\Gamma(\alpha_{1})\Gamma(\alpha_{2})\Gamma(\alpha_{3})} \int_{x_{1}}^{T_{1}}
    \int_{x_{2}}^{T_{2}}
    \int_{x_{3}}^{T_{3}}
    \psi'(s_{1})\psi'(s_{2})\psi'(s_{3})
    (\psi(s_{1})-\psi(x_{1}))^{\alpha_{1}-1}\notag\\
    \times
    (\psi(s_{2})-\psi(x_{2}))^{\alpha_{2}-1}
    (\psi(s_{3})-\psi(x_{3}))^{\alpha_{3}-1}
    u(s_{1},s_{2},s_{3}) ds_{3}ds_{2}ds_{1},
\end{eqnarray*}
with $x_{1}<s_{1}<T_{1}, x_{2}<s_{2}<T_{2}, x_{3}<s_{3}<T_{3}$, $x_{1}\in[\theta_{1},T_{1}]$, $x_{2}\in[\theta_{2},T_{2}]$ and $x_{3}\in[\theta_{3},T_{3}]$, respectively.}
\end{itemize}

\textcolor{blue}{ On the other hand, let $u,\psi \in C^{n}(\Lambda)$ two functions such that $\psi$ is increasing and $\psi'(x_{j})\neq 0$ with $x_{j}\in[\theta_{j},T_{j}]$, $j\in \left\{1,2,3 \right\}$. The left and
right-sided $\psi$-Hilfer fractional partial derivative of $3$-variables of $u\in AC^{n}(\Lambda)$ of order $\alpha=(\alpha_{1},\alpha_{2},\alpha_{3})$ $(0<\alpha_{1},\alpha_{2},\alpha_{3}\leq 1)$ and type $\beta=(\beta_{1},\beta_{2},\beta_{3})$ where $0\leq\beta_{1},\beta_{2},\beta_{3}\leq 1$, are defined by \cite{Srivastava}
\begin{equation*}
{^{\mathbf H}{\bf D}}^{\alpha,\beta;\psi}_{\theta}u(x_{1},x_{2},x_{3})= {\bf I}^{\beta(1-\alpha),\psi}_{\theta} \Bigg(\frac{1}{\psi'(x_{1})\psi'(x_{2})\psi'(x_{3})} \Bigg(\frac{\partial^{3}} {\partial x_{1}\partial x_{2}\partial x_{3}}\Bigg) \Bigg) {\bf I}^{(1-\beta)(1-\alpha),\psi}_{\theta} u(x_{1},x_{2},x_{3})
\end{equation*}
and
\begin{equation*}
{^{\mathbf H}{\bf D}}^{\alpha,\beta;\psi}_{T}u(x_{1},x_{2},x_{3})= {\bf I}^{\beta(1-\alpha),\psi}_{T} \Bigg(-\frac{1}{\psi'(x_{1})\psi'(x_{2})\psi'(x_{3})} \Bigg(\frac{\partial^{3}} {\partial x_{1}\partial x_{2}\partial x_{3}}\Bigg) \Bigg) {\bf I}^{(1-\beta)(1-\alpha),\psi}_{T} u(x_{1},x_{2},x_{3}),
\end{equation*}}
where $\theta$ and $T$ are the \textcolor{red}{ same parameters presented in the definition of $\psi$-Riemann-Liouville fractional integrals ${\bf I}_{T}^{\alpha;\psi}(\cdot)$ and ${\bf I}_ {\theta}^{\alpha;\psi}(\cdot)$. For a study of $N$-variables, see \cite{Srivastava}.}

\textcolor{blue}{ Taking $\theta =0$ in the definition of ${^{\mathbf H}{\bf D}}^{\alpha,\beta;\psi}_{\theta}(\cdot)$, we have ${^{\mathbf H}{\bf D}}^{\alpha,\beta;\psi}_{0}(\cdot)$. During the paper we will use the following notation: ${^{\mathbf H}{\bf D}}^{\alpha,\beta;\psi}_{\theta} u(x_{1},x_{2},x_{3}):= {^{\mathbf H}{\bf D}}^{\alpha,\beta;\psi}_{\theta} u$, ${^{\mathbf H}{\bf D}}^{\alpha,\beta;\psi}_{T} u(x_{1},x_{2},x_{3}):= {^{\mathbf H}{\bf D}}^{\alpha,\beta;\psi}_{T} u$ and ${\bf I}_{\theta}^{\alpha;\psi}u(x_{1},x_{2},x_{3}):= {\bf I}_{\theta}^{\alpha;\psi}u$.}

 \textcolor{blue}{ A function $u\in \mathbb{H}^{\alpha,\beta;\psi}_{\mathcal{H},0}(\Omega)$ is said to be a weak solution if $u>0$ for a.a. $x\in\Omega$ and 
\begin{equation*}
\begin{split}
    \displaystyle\int_{\Omega}\left(\arrowvert\derivada u\arrowvert^{p-2}\,\derivada  u+ \mu(x)\arrowvert\derivada u\arrowvert^{q-2}\,\derivada u\right)\derivada h\mathrm{d}x\\
   =\displaystyle\int_{\Omega}a(x)u^{-\gamma}h\mathrm{d}x+\lambda\displaystyle\int_{\Omega} u^{r-1}hdx
\end{split}
\end{equation*}
is satisfied for all $h\in  \mathbb{H}^{\alpha, \beta,\psi}_{\mathcal{H},0}(\Omega)$.}

Let $A:\mathbb{H}^{\alpha,\beta;\psi}_{\mathcal{H},0}(\Omega)\rightarrow \mathbb{H}^{\alpha,\beta;\psi}_{\mathcal{H},0}(\Omega)^*$, where $\mathbb{H}^{\alpha,\beta;\psi}_{\mathcal{H},0}(\Omega)^*$ denotes the dual of $\mathbb{H}^{\alpha,\beta;\psi}_{\mathcal{H},0}(\Omega)$, be the nonlinear map defined by
\begin{equation*}
     \langle Au, \varphi\rangle \coloneqq \displaystyle\int_\Omega \left(\lvert\derivada u\rvert^{p-2}\derivada u+ \mu(x)\lvert\derivada u\rvert^{q-2}\derivada u\right)\derivada \varphi\mathrm{d}x,
\end{equation*}
for all $u,\varphi\in \mathbb{H}^{\alpha,\beta;\psi}_{\mathcal{H},0}(\Omega)$.

Consider the energy functional associated to problem Eq.(\ref{eq1}), say $\mathbf{E}_\lambda:\mathbb{H}^{\alpha,\beta;\psi}_{\mathcal{H},0}(\Omega)\rightarrow\mathbb{R}$ given by
\begin{equation*}
    \mathbf{E}_\lambda(u)=\frac{1}{p}\lVert \derivada u\rVert_{L^{p}(\Omega)}^{p} +\frac{1}{q}\lVert \derivada u\rVert_{L_{\mu}^{q}(\Omega)}^{q}-\frac{1}{1-\gamma}\displaystyle\int_\Omega a(x)|u|^{1-\gamma}\mathrm{d}x -\frac{\lambda}{r}\lVert u\rVert_{L^{r}(\Omega)}^{r}.
\end{equation*}

The main result in this paper is the following theorem:

\begin{theorem}\label{prin} Consider that the  hypotheses {\rm\textbf{(H)}} are satisfied. Then, there exists $\hat{\lambda}_{0}^*>0$ such that, for all $\lambda \in(0,\hat{\lambda}_{0}^*]$, problem {\rm(\ref{eq1})} has at least two weak solutions $u^*, v^*\in \mathbb{H}^{\alpha,\beta;\psi}_{\mathcal{H},0}(\Omega)$, with $  \mathbf{E}_\lambda(u^*)<0<  \mathbf{E}_\lambda(v^*)$.
\end{theorem}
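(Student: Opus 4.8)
The plan is to run the fibering method on the Nehari manifold, since the singular term $a(x)u^{-\gamma}$ prevents $\mathbf{E}_\lambda$ from being $C^1$, so a direct application of smooth critical point theory is unavailable. First I would record the functional-analytic setup for $\mathbb{H}^{\alpha,\beta;\psi}_{\mathcal{H},0}(\Omega)$: a continuous embedding into $L^{s}(\Omega)$ for $s\le p_\alpha^*$ and a compact one for $s<p_\alpha^*$, together with the $(S_+)$ property of the operator $A$. Under hypotheses \textbf{(H)}, these give that the two leading (phase) terms of $\mathbf{E}_\lambda$ are sequentially weakly lower semicontinuous, while the singular term (using $1-\gamma<p$) and the $\lambda$-term (using $q<r<p_\alpha^*$) are sequentially weakly continuous along the compact embeddings. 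Positivity of the eventual solutions is needed for $a(x)u^{-\gamma}$ to be meaningful, so I would also fix a truncation making $a(x)|u|^{1-\gamma}$ the relevant quantity on the Nehari set.

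For each $u$ with $u>0$, consider the fibering map $\varphi_u(t):=\mathbf{E}_\lambda(tu)$, $t>0$, whose derivative is
\[
\varphi_u'(t)=t^{p-1}\lVert\derivada u\rVert_{L^{p}(\Omega)}^{p}+t^{q-1}\lVert\derivada u\rVert_{L_{\mu}^{q}(\Omega)}^{q}-t^{-\gamma}\into a(x)|u|^{1-\gamma}\,dx-\lambda t^{r-1}\lVert u\rVert_{L^{r}(\Omega)}^{r}.
\]
Multiplying by $t^{\gamma}>0$, the critical points of $\varphi_u$ are exactly the solutions of $\psi_u(t)=\into a(x)|u|^{1-\gamma}\,dx$, where
\[
\psi_u(t)=t^{p-1+\gamma}\lVert\derivada u\rVert_{L^{p}(\Omega)}^{p}+t^{q-1+\gamma}\lVert\derivada u\rVert_{L_{\mu}^{q}(\Omega)}^{q}-\lambda t^{r-1+\gamma}\lVert u\rVert_{L^{r}(\Omega)}^{r}.
\]
Because $p<q<r$, the exponents satisfy $p-1+\gamma<q-1+\gamma<r-1+\gamma$, so $\psi_u$ vanishes at $t=0$, rises to a unique positive maximum, and then decays to $-\infty$; the uniqueness of the maximum follows from examining the single sign change of $\psi_u'$. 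Hence, for $\lambda$ below a threshold $\hat\lambda_0^*$ (obtained by requiring the maximal value of $\psi_u$ to exceed the singular integral uniformly, where the embedding constants and smallness of $\lambda$ enter), the level is met twice, yielding $0<t^+(u)<t^-(u)$ and the decomposition $\mathcal{N}_\lambda=\mathcal{N}_\lambda^+\cup\mathcal{N}_\lambda^-$ with $\mathcal{N}_\lambda^0=\emptyset$.

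Next I would minimize $\mathbf{E}_\lambda$ over each component. On $\mathcal{N}_\lambda^+$, the point $t^+(u)$ is a local minimum of the fibering map lying left of the maximum, which forces $m^+:=\inf_{\mathcal{N}_\lambda^+}\mathbf{E}_\lambda<0$; coercivity of $\mathbf{E}_\lambda$ on $\mathcal{N}_\lambda$ (again from $p<q$ dominating $1-\gamma$ and $r$ on the constraint) keeps a minimizing sequence bounded, and after extracting a weak limit the $(S_+)$ property upgrades weak convergence to strong convergence, producing a minimizer $u^*$ with $\mathbf{E}_\lambda(u^*)=m^+<0$. Running the analogous scheme on $\overline{\mathcal{N}_\lambda^-}$ yields $v^*$ with $\mathbf{E}_\lambda(v^*)=m^->0$; the strict separation $m^+<0<m^-$ guarantees $u^*\neq v^*$.

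The main obstacle, typical of singular double phase problems, is the final step: verifying that the constrained minimizers are genuine weak solutions of \eqref{eq1}, without a spurious Lagrange multiplier and with the singular integral $\into a(x)(u^*)^{-\gamma}h\,dx$ finite. I would first establish $u^*,v^*>0$ a.e., then test the functional along $u^*+\eps h$ for $h\ge 0$, divide by $\eps$, and let $\eps\to 0^+$, using Fatou's lemma to control the singular contribution and obtain a variational inequality; the sign restriction on $h$ is then removed by the standard density/decomposition argument. The fact that $\mathcal{N}_\lambda^0=\emptyset$ is exactly what makes the Nehari constraint non-degenerate, so the multiplier vanishes and the Euler--Lagrange identity holds. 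The interplay of the two homogeneities $p$ and $q$ in the double phase operator makes both the fibering estimates and the passage to the limit through the singularity more delicate than in the single-phase case, and this is where the bulk of the technical work lies.
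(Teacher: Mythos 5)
You follow essentially the same route as the paper: fibering analysis of $w_u(t)=\mathbf{E}_\lambda(tu)$, the decomposition of $\mathbf{N}_\lambda$ into $\mathbf{N}_\lambda^{\pm}$ with $\mathbf{N}_\lambda^{0}=\emptyset$ for small $\lambda$ (Propositions \ref{p33} and \ref{p34}), minimization of $\mathbf{E}_\lambda$ on each piece, and the Fatou-plus-truncation endgame with test functions $(u^*+\epsilon h)^{+}$ (Propositions \ref{p36} and \ref{p37}). But two steps that you dispatch in a phrase are where the actual difficulty sits, and as written they do not go through. The first is compactness: you claim that the $(S_+)$ property upgrades weak convergence of a minimizing sequence $\{u_n\}\subset\mathbf{N}_\lambda^{+}$ to strong convergence. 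The $(S_+)$ property needs the hypothesis $\limsup_n\langle Au_n,u_n-u^*\rangle\le 0$, and at this stage there is no Euler--Lagrange equation for $u^*$ and nothing to test against, so this hypothesis cannot be verified. All that comes for free is $\rho^{\mathcal{H}}(\derivada u^*)\le\liminf_n\rho^{\mathcal{H}}(\derivada u_n)$, and if that inequality were strict then $u^*$ could fall off the Nehari set and the infimum $m_\lambda^{+}$ need not be attained. The paper closes precisely this hole with the Claim inside Proposition \ref{p34}: assuming strict inequality, one shows $w_{u_n}'(t_1)>0$ for large $n$, deduces $t_1>1$ from $w_{u_n}'(1)=0$ and the sign of $w_{u_n}'$ on $(0,1)$, and then uses that $w_{u^*}$ decreases on $(0,t_1]$ together with $t_1u^*\in\mathbf{N}_\lambda^{+}$ to contradict the minimality of $m_\lambda^{+}$. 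Some substitute for this argument is mandatory; the $(S_+)$ property alone cannot deliver it.

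The second gap is the assertion $m^{-}=\mathbf{E}_\lambda(v^*)>0$. Your threshold only guarantees that every fibering map crosses its level twice, i.e. $\mathbf{N}_\lambda^{\pm}\neq\emptyset$ and $\mathbf{N}_\lambda^{0}=\emptyset$; nothing in that analysis prevents the value of $w_u$ at the second crossing (the maximum point $t^{-}(u)$) from being negative. Positivity of $\mathbf{E}_\lambda$ on $\mathbf{N}_\lambda^{-}$ is a separate smallness-in-$\lambda$ statement, the paper's Proposition \ref{p38}, proved by contradiction: if $\mathbf{E}_\lambda(v)\le 0$ for some $v\in\mathbf{N}_\lambda^{-}$, then the Nehari constraint combined with the energy inequality forces $\lVert v\rVert_{L^r(\Omega)}^{r}\le c\,\lambda^{-1/(r+\gamma-1)}$, while the definition of $\mathbf{N}_\lambda^{-}$ and the Sobolev embedding force $\lVert v\rVert_{L^r(\Omega)}^{r}\ge c'\,\lambda^{-1/(r-p)}$; since $1/(r-p)>1/(r+\gamma-1)$ (because $1-\gamma<p$), these are incompatible as $\lambda\rightarrow 0^{+}$, and it is exactly this argument that produces the final threshold $\hat{\lambda}_0^{*}$ appearing in the theorem, strictly smaller in general than the threshold governing the decomposition. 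Finally, note that because $\mathbf{E}_\lambda$ is not $C^1$, there is no Lagrange multiplier to ``vanish''; the rigorous mechanism replacing that language is the implicit-function-theorem perturbation (Lemma \ref{l35}) yielding the one-sided local minimality $\mathbf{E}_\lambda(u^*)\le\mathbf{E}_\lambda(u^*+th)$ for $t\in[0,b]$ (Proposition \ref{p36}), which is what licenses the directional-derivative and Fatou computation you describe.
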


\subsection{Remarks and Consequences.}
 
\begin{itemize}
       \item \textcolor{blue}{ A natural consequence, when investigating problems involving fractional operators, is to discuss the integer case, in other words, at the limit $\alpha\rightarrow 1$. However, in the investigated problem Eq.(\ref{eq1}), in addition to $\alpha$, it is also necessary to take $\psi(x)=x$ to obtain the classical problem in $\mathbb{R}^{2 }$, given by
    \begin{equation*}
\left\{ 
\begin{array}{ccc}
{\bf L} u & = & a(x)u^{-\gamma}+\lambda u^{r-1}, \, \mbox{ in }\Omega=[0,T]\times[0,T] \\ 
u & =&0, \mbox{ on }\partial \Omega    
\end{array}%
\right.
\end{equation*}
with
    \begin{equation*}
        {\bf L} u:= \sum_{i=1}^{2} \,\,
\dfrac{\partial}{\partial x_{i}}\left( \left\vert \dfrac{\partial u}{\partial x_{i}}\right\vert ^{p-2}\dfrac{\partial u}{\partial x_{i}}+ \mu(x) \left\vert \dfrac{\partial u}{\partial x_{i}}\right\vert ^{p-2}\dfrac{\partial u}{\partial x_{i}} \right)
\end{equation*}    
with the same conditions on $\phi$ as given in the problem (\ref{eq1}).}

\item \textcolor{blue}{ From the previous item, we have that all the results investigated here are valid for the entire case.}

\item \textcolor{blue}{ One of the advantages when working with fractional operators, in addition to the generalization provided by the parameters $\alpha$ and $\beta$, and the function $\psi(\cdot)$ in the mathematical sense, it provides a more detailed analysis when involves physical problems as highlighted at the beginning of the work, precisely because of the freedom to float $\frac{1}{p}<\alpha<1$. See, for example, the following papers where authors use fractional operators to discuss real problems and highlight the importance of operator order.}

\item \textcolor{blue}{ For example, taking $\psi(x)=x$ and the limit of $\beta\rightarrow 1$ we have that the problem is valid for the Riemann-Liouville fractional operator. On the other hand, taking $\psi(x)=x$ and the limit of $\beta \rightarrow 0$, we have the Caputo fractional operator context problem. However, the way the problem was proposed, as well as the space, here we cannot take the particular case of $\psi(x)=\ln x$, as it would not satisfy the conditions imposed on $\psi(x)$ as per in the definition of the $\psi$-Hilfer fractional operator with its respective space.}

\item \textcolor{blue}{ Based on the previous item, there is a way around this problem, which is to work in space $\mathbb{H}_{\mathcal{H},0}^{\alpha,\beta;\psi}(\Omega)$ with weight, where the weight is exactly $\psi'(\cdot)$.}

\item \textcolor{blue}{ One of the important consequences of this work is that it will provide results of singular double phase problems $p$-Laplacian equation problems with a $\psi$-Hilfer fractional operator for the literature, especially for the field of fractional differential equations, since problems of this type (see problem Eq.(\ref{eq1})), it is still very restricted and there are few tools and results. In this sense, there are numerous open issues that can be discussed.}

\end{itemize}

The rest of the paper is organized as follows: In section 2, we present some variational results which will be needed  to obtain the main results of this paper. In section 3, we investigate some important results that will be used in the  proof of  Theorem \ref{prin}.

\section{Preliminaries}

Let $\mathcal{H}: \Omega\times [0,\infty)\rightarrow[0,\infty)$ be the function defined by $\mathcal{H}(x,t)= t^p +\mu(x)t^q$.
Then, the weighted Musielak-Orlicz space $L^{\mathcal{H}}(\Omega)$ is defined by \cite{Colasuonno,Musi} as
\begin{align*}
L^{\mathcal{H}}(\Omega) = \{\, u:\Omega \rightarrow \mathbb{R} \mbox{ is measurable with } \rho^{\mathcal{H}}(u)<+\infty\}
\end{align*}
 with the Luxemburg norm
\begin{align*}
   \left\| u\right\|^{\mathcal{H}} = \inf\left\{\tau>0:\rho^{\mathcal{H}}\left(\frac{u}{\tau}\right)\leq 1\right\},
\end{align*}
where the modular function $\rho^{\mathcal{H}}:L^{\mathcal{H}}(\Omega)\rightarrow \mathbb{R}$ is given by
\begin{equation}\label{pl}
    \rho^{\mathcal{H}}\coloneqq \displaystyle\int_{\Omega}\mathcal{H}(x,|u|)\mathrm{d}x\equiv \displaystyle\int_{\Omega}\left(|u|^p+\mu(x)|u|^q\right)\mathrm{d}x.
\end{equation}

Moreover, we define the seminormed space
\begin{align*}
    L^{q}_{\mu}(\Omega) = \left\{u \,|\, u:\Omega \rightarrow \mathbb{R} \mbox{ is measurable and } \displaystyle\int_\Omega \mu(x)|u|^q\mathrm{d}x<+\infty\right\},
\end{align*}
which is endowed with the seminorm 
\begin{align*}
    \lVert u\rVert_{\mu} = \left(\displaystyle\int_\Omega \mu(x)|u|^q\right)^{\frac{1}{q}}.
\end{align*}

\textcolor{blue}{ The $(\psi,\mathcal{H})$-fractional space $\mathbb{H}_{\mathcal{H},0}^{\alpha,\beta;\psi}(\Omega)$ is defined by
\begin{align*}
 \mathbb{H}_{\mathcal{H},0}^{\alpha,\beta;\psi}(\Omega)   \equiv\left\{u\in  L^{\mathcal{H}}(\Omega) : |\derivada u|\in L^{\mathcal{H}}(\Omega);u=0,\,\,on \,\,\partial\Omega\right\},
\end{align*}
equipped with the norm
\begin{align*}
   \norma[u]{1,\mathcal{H}}{\psi} = \lVert \derivada u\rVert_{\psi}^{\mathcal{H}}+\lVert u\rVert_{\psi}^{\mathcal{H}},
\end{align*}
where $\lVert \derivada u\rVert_{\psi}^{\mathcal{H}}=\lVert \,\lvert\derivada u\rvert\rVert_{\psi}^{\mathcal{H}}$.}

The completion of $\mathrm{C}_{0}^{\infty}(\Omega)$ of $\mathbb{H}_{\mathcal{H},0}^{\alpha,\beta;\psi}(\Omega)$ is denoted by $\mathbb{H}_{\mathcal{H},0}^{\alpha,\beta;\psi}(\Omega)$ and, from \textbf{(H)} (i), we have an equivalent norm on $\mathbb{H}_{\mathcal{H},0}^{\alpha,\beta;\psi}(\Omega)$ given by
\begin{align*}
    \lVert u\rVert_{\psi}^{0,\mathcal{H}} =  \lVert \derivada u\rVert_{\psi}^{\mathcal{H}}.
\end{align*}

The results below will be needed for our purposes.
\begin{proposition}\cite{Wulong}\label{Proposition2.1} Consider that the hypothesis {\rm(i)}  of {\bf (H)}  is  satisfied. Then the following embeddings hold:
\begin{itemize}
    \item [(i)] $L^{\mathcal{H}}(\Omega)\hookrightarrow L^{r}(\Omega)$ and $W^{1,\mathcal{H}}_{0}(\Omega)\hookrightarrow W^{1,r}_{0}(\Omega)$ are continuous for all $r\in [1,p]$;
    
    \item [(ii)] $W^{1,\mathcal{H}}_{0}(\Omega)\hookrightarrow L^{r}(\Omega)$ is continuous for all $r\in[1,p^{*}]$;
    
    \item [(iii)] $W^{1,\mathcal{H}}_{0}(\Omega)\hookrightarrow L^{r}(\Omega)$ is compact for all $r\in [1,p^{*})$;
    
    \item [(iv)] $L^{\mathcal{H}}(\Omega)\hookrightarrow L^{q}_{\mu}(\Omega)$ is continuous;
    
    \item [(vi)] $L^{q}(\Omega)\hookrightarrow L_{\mathcal{H}}(\Omega)$ is continuous.
\end{itemize}

\end{proposition}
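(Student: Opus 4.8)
The plan is to deduce all five inclusions from two elementary facts about the Luxemburg norm together with the classical embedding theory on the bounded domain $\Omega$. The first fact is the monotonicity of the Luxemburg norm: if $\Phi_{1}(x,t)\le\Phi_{2}(x,t)$ for a.e.\ $x\in\Omega$ and all $t\ge 0$, then $\|u\|_{\Phi_{1}}\le\|u\|_{\Phi_{2}}$, which is immediate because enlarging the generating function shrinks the set $\{\tau>0:\int_{\Omega}\Phi(x,|u|/\tau)\,\mathrm{d}x\le 1\}$ over which the defining infimum is taken. The second fact is that the ordinary $L^{p}$-norm is the Luxemburg norm generated by $t^{p}$ and that $\|\cdot\|_{\mu}$ is the Luxemburg seminorm generated by $\mu(x)t^{q}$, so that comparisons of generating functions translate directly into comparisons of these norms. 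For the single reverse embedding I will instead work with the modular $\rho^{\mathcal{H}}$ and exploit the explicit scaling $\rho^{\mathcal{H}}(u/\tau)=\tau^{-p}\int_{\Omega}|u|^{p}\,\mathrm{d}x+\tau^{-q}\int_{\Omega}\mu(x)|u|^{q}\,\mathrm{d}x$.

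Items (i) and (iv) follow at once from the first fact. Since $t^{p}\le\mathcal{H}(x,t)$, monotonicity gives $\|u\|_{L^{p}}\le\|u\|^{\mathcal{H}}$, that is $L^{\mathcal{H}}(\Omega)\hookrightarrow L^{p}(\Omega)$; because $\Omega$ is bounded, H\"older's inequality yields $L^{p}(\Omega)\hookrightarrow L^{r}(\Omega)$ for every $r\in[1,p]$, and composing proves the first half of (i), while $W^{1,\mathcal{H}}_{0}(\Omega)\hookrightarrow W^{1,r}_{0}(\Omega)$ follows by applying the same inequality to $u$ and to $\nabla u$ and adding the resulting estimates. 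Likewise $\mu(x)t^{q}\le\mathcal{H}(x,t)$ gives $\|u\|_{\mu}\le\|u\|^{\mathcal{H}}$, which is exactly the continuous embedding $L^{\mathcal{H}}(\Omega)\hookrightarrow L^{q}_{\mu}(\Omega)$ of item (iv).

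For the Sobolev-type statements (ii) and (iii) I would compose the inclusion just obtained with classical theorems. Taking $r=p$ in the gradient part of (i) produces the continuous embedding $W^{1,\mathcal{H}}_{0}(\Omega)\hookrightarrow W^{1,p}_{0}(\Omega)$. Chaining this with the Sobolev embedding $W^{1,p}_{0}(\Omega)\hookrightarrow L^{r}(\Omega)$, valid for $r\in[1,p^{*}]$, proves (ii); chaining instead with the Rellich--Kondrachov theorem, which renders $W^{1,p}_{0}(\Omega)\hookrightarrow L^{r}(\Omega)$ compact for $r\in[1,p^{*})$, proves (iii), since the composition of a bounded operator with a compact one is compact.

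The remaining item (vi) runs in the opposite direction and is the only genuinely delicate point, because $\mathcal{H}$ is not dominated by a single power of $t$ (for small $t$ one has $t^{p}\ge t^{q}$), so the first fact does not apply. Here I would argue at the level of the modular: for $u\in L^{q}(\Omega)$, using $\mu\in L^{\infty}(\Omega)$ and H\"older's inequality on the bounded domain to bound $\|u\|_{L^{p}}$ by $\|u\|_{L^{q}}$, one estimates $\rho^{\mathcal{H}}(u/\tau)\le \tau^{-p}c_{0}\|u\|_{L^{q}}^{p}+\tau^{-q}\|\mu\|_{\infty}\|u\|_{L^{q}}^{q}$, where $c_{0}=|\Omega|^{1-p/q}$. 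Choosing $\tau=C\|u\|_{L^{q}}$ with $C$ large enough that the right-hand side does not exceed $1$ forces $\rho^{\mathcal{H}}(u/\tau)\le 1$, hence $\|u\|^{\mathcal{H}}\le C\|u\|_{L^{q}}$, which is the continuity of $L^{q}(\Omega)\hookrightarrow L^{\mathcal{H}}(\Omega)$. The crux of the whole proposition is precisely this scaling choice $\tau\propto\|u\|_{L^{q}}$, which circumvents the non-homogeneity of $\mathcal{H}$; once it is in place every embedding reduces to a pointwise inequality followed by a classical theorem, and the rest is routine.
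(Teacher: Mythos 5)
Your proposal is correct, and it is essentially the argument behind the cited result: the paper gives no proof of its own, quoting Proposition \ref{Proposition2.1} from \cite{Wulong}, where the embeddings are obtained exactly as you do --- by pointwise comparison of the generating functions of the Luxemburg norms ($t^{p}\leq \mathcal{H}(x,t)$ and $\mu(x)t^{q}\leq \mathcal{H}(x,t)$ give (i) and (iv)), composition with the classical Sobolev and Rellich--Kondrachov theorems on the bounded domain for (ii) and (iii), and a modular scaling argument with $\tau\propto\lVert u\rVert_{L^{q}(\Omega)}$ for the non-homogeneous direction (vi). The only cosmetic remarks are that in the gradient half of (i) you should note that the zero-boundary class is preserved (the comparison of norms maps the completion of $\mathrm{C}_{0}^{\infty}(\Omega)$ in the $\mathcal{H}$-norm into the completion in the $W^{1,r}$-norm, i.e.\ into $W^{1,r}_{0}(\Omega)$, by density), and that the target space in (vi) is $L^{\mathcal{H}}(\Omega)$ despite the statement's typographical $L_{\mathcal{H}}(\Omega)$ --- both points your argument implicitly covers.
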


\begin{proposition}\cite{Liu}\label{Proposition2.2} Suppose that the hypothesis {\rm(i)}  of {\bf (H)}  is  satisfied and consider $y\in L^{\mathcal{H}}(\Omega)$ and  $\rho^{\mathcal{H}}$  defined in {\rm Eq.(\ref{pl})}. Then the following assertions hold:
\begin{itemize}
\item[(i)] If $y\neq 0$, then $||y||_{\mathcal{H}}=\lambda$ if and only if $\rho^{\mathcal{H}} (y/\lambda)=1$.

\item[(ii)] $||y||_{\mathcal{H}}<1$ $(resp.>1,=1)$ if and only if $\rho^{\mathcal{H}}<1$ $(resp. >1,=1)$.

\item[(iii)] If $||y||_{\mathcal{H}}<1$, then $||y||_{\mathcal{H}}^{q}\leq \rho^{\mathcal{H}}(y)\leq ||y||^{p}_{\mathcal{H}}$;

\item[(iv)] If $||y||_{\mathcal{H}}>1$, then $||y||_{\mathcal{H}}^{p}\leq \rho^{\mathcal{H}}(y)\leq ||y||^{q}_{\mathcal{H}}$;

\item[(v)] $||y||_{\mathcal{H}}\rightarrow 0$ if and only if $\rho^{\mathcal{H}}(y)\rightarrow 0$;

\item[(vi)] $||y||_{\mathcal{H}}\rightarrow +\infty$ if and only if $\rho^{\mathcal{H}}\rightarrow +\infty$.
\end{itemize}
\end{proposition}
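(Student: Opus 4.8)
The plan is to derive the whole proposition from two elementary ingredients: the scaling behaviour of the integrand $\mathcal{H}(x,t)=t^p+\mu(x)t^q$, and the continuity and strict monotonicity of $\tau\mapsto\rho^{\mathcal{H}}(y/\tau)$. First I would record the \emph{scaling inequalities}. Since $1<p<q$ and $\mu(\cdot)\geq 0$, for every $t\geq 0$ and $x\in\Omega$ we have $\mathcal{H}(x,\sigma t)=\sigma^p t^p+\mu(x)\sigma^q t^q$; comparing $\sigma^p$ with $\sigma^q$ according to whether $\sigma\lessgtr 1$ and integrating over $\Omega$ yields
\begin{equation*}
\sigma^q\,\rho^{\mathcal{H}}(u)\leq\rho^{\mathcal{H}}(\sigma u)\leq\sigma^p\,\rho^{\mathcal{H}}(u)\qquad(0<\sigma\leq 1),
\end{equation*}
\begin{equation*}
\sigma^p\,\rho^{\mathcal{H}}(u)\leq\rho^{\mathcal{H}}(\sigma u)\leq\sigma^q\,\rho^{\mathcal{H}}(u)\qquad(\sigma\geq 1).
\end{equation*}
These two chains carry essentially all the quantitative content of the statement.

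For item (i), fix $y\neq 0$ and set $g(\tau)=\rho^{\mathcal{H}}(y/\tau)$ on $(0,\infty)$. I would check that $g$ is continuous (dominated convergence, using $y\in L^{\mathcal{H}}(\Omega)$ so that both $|y|^p$ and $\mu|y|^q$ are integrable), strictly decreasing (because $t\mapsto\mathcal{H}(x,t)$ is strictly increasing on the set $\{|y|>0\}$, which has positive measure since $y\neq 0$), with $g(\tau)\to+\infty$ as $\tau\to 0^+$ (from the lower bound $g(\tau)\geq\tau^{-p}\int_\Omega|y|^p$ together with $\int_\Omega|y|^p>0$) and $g(\tau)\to 0$ as $\tau\to\infty$ (from $g(\tau)\leq\tau^{-p}\rho^{\mathcal{H}}(y)$). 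By the intermediate value theorem there is a unique $\tau_0$ with $g(\tau_0)=1$, and $\{\tau>0:g(\tau)\leq 1\}=[\tau_0,\infty)$, whose infimum is $\tau_0$. Hence $\|y\|_{\mathcal{H}}=\tau_0$ and $\rho^{\mathcal{H}}(y/\|y\|_{\mathcal{H}})=1$, which is exactly (i).

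Items (iii) and (iv) then follow by specializing the scaling chains. Putting $\lambda:=\|y\|_{\mathcal{H}}$, item (i) gives $\rho^{\mathcal{H}}(y/\lambda)=1$; writing $y=\lambda(y/\lambda)$ and applying the first chain when $\lambda<1$ (resp.\ the second when $\lambda>1$) produces $\lambda^q\leq\rho^{\mathcal{H}}(y)\leq\lambda^p$ (resp.\ $\lambda^p\leq\rho^{\mathcal{H}}(y)\leq\lambda^q$), i.e.\ (iii) (resp.\ (iv)). Item (ii) is then obtained by a trichotomy argument: the case $\|y\|_{\mathcal{H}}=1\iff\rho^{\mathcal{H}}(y)=1$ is the $\lambda=1$ instance of (i), while (iii)--(iv) give the implications $\|y\|_{\mathcal{H}}<1\Rightarrow\rho^{\mathcal{H}}(y)<1$ and $\|y\|_{\mathcal{H}}>1\Rightarrow\rho^{\mathcal{H}}(y)>1$; since the three cases are mutually exclusive and exhaustive on each side, every converse follows automatically.

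Finally, (v) and (vi) are asymptotic consequences of (iii)--(iv). For (v): if $\|y\|_{\mathcal{H}}\to 0$ then eventually $\|y\|_{\mathcal{H}}<1$, so $\rho^{\mathcal{H}}(y)\leq\|y\|_{\mathcal{H}}^p\to 0$; conversely $\rho^{\mathcal{H}}(y)\to 0$ forces $\|y\|_{\mathcal{H}}<1$ by (ii), whence $\|y\|_{\mathcal{H}}\leq\rho^{\mathcal{H}}(y)^{1/q}\to 0$; item (vi) is the symmetric argument using (iv). The only genuinely delicate point is (i)---verifying that the Luxemburg infimum is actually attained at the level set $\{g=1\}$---and that is precisely where the hypothesis $y\neq 0$ (hence $\int_\Omega|y|^p>0$, forcing $g(0^+)=+\infty$) is indispensable; everything else is mechanical once the scaling inequalities are in hand.
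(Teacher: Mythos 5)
Your proof is correct, but there is nothing internal to compare it with: the paper states this proposition without proof, quoting it from the reference \cite{Liu} (Liu and Dai). Your argument is essentially the standard one from that literature (the Fan--Zhao-type modular/norm comparison, adapted to $\mathcal{H}(x,t)=t^p+\mu(x)t^q$): the two scaling chains $\sigma^q\rho^{\mathcal{H}}(u)\le\rho^{\mathcal{H}}(\sigma u)\le\sigma^p\rho^{\mathcal{H}}(u)$ for $0<\sigma\le1$ and their reversal for $\sigma\ge1$ are exactly the mechanism used there, and your reduction of (ii)--(vi) to (i) plus these chains is sound. You also correctly identify the only delicate point, namely item (i): the continuity (dominated convergence, using $\rho^{\mathcal{H}}(y)<\infty$), strict monotonicity on $(0,\infty)$, and the limits $g(0^+)=+\infty$, $g(+\infty)=0$ of $g(\tau)=\rho^{\mathcal{H}}(y/\tau)$, which together show the Luxemburg infimum is attained at the unique root of $g=1$; this is where $y\neq0$ and hypothesis (H)(i) (only $1<p<q$ and $\mu\ge0$ are actually used, not $\mu\in L^{\infty}(\Omega)$) genuinely enter. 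What your write-up buys over the paper's citation is self-containedness and a clear accounting of hypotheses. Two minor remarks: in item (ii) the case $y=0$ is not covered by your appeal to (i), but it is trivial since $\lVert 0\rVert^{\mathcal{H}}=0$ and $\rho^{\mathcal{H}}(0)=0$; and in (v)--(vi) the statements concern sequences (or nets), so one should say explicitly that the pointwise bounds from (iii)--(iv) are applied to each term once the norm (resp.\ modular) is eventually on the relevant side of $1$ --- which is exactly how you argue.
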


\section{Main Results}

Due to the presence of the singular term $a(x)\lvert u\rvert^{1-\gamma}$, we have  that $\mathbf{E}_\lambda$ is not $\mathrm{C}^1$. To overcome this difficulty, we will consider the Nehari manifold method.

Now we consider the fibering function $w_u:[0,+\infty)\rightarrow \mathbb{R}$, for $u\in\mathbb{H}^{\alpha,\beta;\psi}_{\mathcal{H},0}(\Omega) $ defined by
\begin{equation*}
    w_u(t)=\mathbf{E}_\lambda(tu), \, \mbox{ for all } t\geq 0.
\end{equation*}

\textcolor{blue}{The Nehari manifold corresponding to the functional $\mathbf{E}_\lambda$ is defined by
\begin{eqnarray*}
    \mathbf{N}_{\lambda} &=& \left\{u\in \mathbb{H}^{\alpha,\beta;\psi}_{\mathcal{H},0}(\Omega)\setminus \{0\} : \lVert \derivada u\lVert_{L^{p}}^{p} + \lVert \derivada u\lVert_{L_{\mu}^{q}}^{q} = \displaystyle\int_\Omega a(x)|u|^{1-\gamma}\mathrm{d}x +\lambda\lVert u\rVert_{L^{r}}^{r}\right\}\notag\\
    &=& \left\{u\in \mathbb{H}^{\alpha,\beta;\psi}_{\mathcal{H},0}(\Omega)\setminus \{0\} :w'(1) =0 \right\}
\end{eqnarray*}}

\textcolor{blue}{ For further consideration, we need to decompose the set $\mathbf{N}_\lambda$ in the following way:
\begin{eqnarray*}
\mathbf{N}_{\lambda}^+ &=& \left\{u\in  \mathbf{N}_{\lambda} : (p+\gamma-1)\lVert \derivada u\lVert_{L^{p}}^{p} + (q+\gamma-1)\lVert \derivada u\lVert_{L_{\mu}^{q}}^{q} -\lambda(r+\gamma-1)\lVert u\rVert_{L^{r}}^{r}>0\right\}
\notag\\
    &=& \left\{u\in \mathbf{N}_{\lambda} :w''(1) >0  \right\},
\end{eqnarray*}
\begin{eqnarray*}
\mathbf{N}_{\lambda}^0 &=& \left\{ u\in  \mathbf{N}_{\lambda}  : (p+\gamma-1)\lVert \derivada u\lVert_{L^{p}}^{p} + (q+\gamma-1)\lVert \derivada u\lVert_{L_{\mu}^{q}}^{q} = \lambda(r+\gamma-1)\lVert u\rVert_{L^{r}}^{r}\right\}\notag\\
    &=& \left\{ u\in \mathbf{N}_{\lambda} :w''(1)=0 \right\},
\end{eqnarray*}
and
\begin{eqnarray*}
\mathbf{N}_{\lambda}^- &=& \left\{u\in  \mathbf{N}_{\lambda} : (p+\gamma-1)\lVert \derivada u\lVert_{L^{p}}^{p} + (q+\gamma-1)\lVert \derivada u\lVert_{L_{\mu}^{q}}^{q} -\lambda(r+\gamma-1)\lVert u\rVert_{L^{r}}^{r}<0\right\}\notag\\
    &=& \left\{ u\in \mathbf{N}_{\lambda} :w''(1)<0 \right\}.
\end{eqnarray*}}

We start with the following proposition about the coercivity of the energy functional $\mathbf{E}_\lambda$ restricted to $\mathbf{N}_\lambda$.

\begin{proposition}\label{p31}
Suppose that  {\rm\textbf{(H)}} is satisfied. Then $\mathbf{E}_\lambda\lvert_{\mathbf{N}_\lambda}$ is coercive.
\end{proposition}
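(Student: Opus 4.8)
The plan is to exploit the defining constraint of $\mathbf{N}_\lambda$ in order to eliminate the top-order term $\lambda\lVert u\rVert_{L^r(\Omega)}^r$ from $\mathbf{E}_\lambda$, leaving an expression with favourable signs. For $u\in\mathbf{N}_\lambda$ the condition $w'(1)=0$ reads
\[
\lambda\lVert u\rVert_{L^r(\Omega)}^r=\lVert \derivada u\rVert_{L^p(\Omega)}^p+\lVert \derivada u\rVert_{L_\mu^q(\Omega)}^q-\into a(x)|u|^{1-\gamma}\,\mathrm{d}x,
\]
and substituting this into the definition of $\mathbf{E}_\lambda$ gives, for every $u\in\mathbf{N}_\lambda$,
\[
\mathbf{E}_\lambda(u)=\left(\tfrac1p-\tfrac1r\right)\lVert \derivada u\rVert_{L^p(\Omega)}^p+\left(\tfrac1q-\tfrac1r\right)\lVert \derivada u\rVert_{L_\mu^q(\Omega)}^q-\left(\tfrac1{1-\gamma}-\tfrac1r\right)\into a(x)|u|^{1-\gamma}\,\mathrm{d}x.
\]
By hypothesis \textbf{(H)} we have $1<p<q<r$ and $0<\gamma<1$, so the coefficients $\tfrac1p-\tfrac1r$ and $\tfrac1q-\tfrac1r$ are strictly positive while $\tfrac1{1-\gamma}-\tfrac1r>0$. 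Hence the only possibly negative contribution is the singular term, which is what I must control.

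The second step is to bound that singular term sublinearly in the norm. Since $a\in L^\infty(\Omega)$ and $1-\gamma<1<p$, Hölder's inequality with exponents $\tfrac{p}{1-\gamma}$ and its conjugate yields
\[
\into a(x)|u|^{1-\gamma}\,\mathrm{d}x\le \lVert a\rVert_{L^\infty(\Omega)}\,|\Omega|^{\frac{p-1+\gamma}{p}}\,\lVert u\rVert_{L^p(\Omega)}^{1-\gamma}.
\]
By the continuous embedding $\mathbb{H}^{\alpha,\beta;\psi}_{\mathcal{H},0}(\Omega)\hookrightarrow L^p(\Omega)$ of Proposition \ref{Proposition2.1}, there is $C>0$ with $\lVert u\rVert_{L^p(\Omega)}\le C\,\lVert u\rVert_\psi^{0,\mathcal{H}}$, so that
\[
\into a(x)|u|^{1-\gamma}\,\mathrm{d}x\le C_1\,\bigl(\lVert u\rVert_\psi^{0,\mathcal{H}}\bigr)^{1-\gamma}
\]
for a constant $C_1>0$ independent of $u$.

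For the leading terms I would invoke Proposition \ref{Proposition2.2} applied to $\derivada u$. Recalling that $\rho^{\mathcal{H}}(\derivada u)=\lVert \derivada u\rVert_{L^p(\Omega)}^p+\lVert \derivada u\rVert_{L_\mu^q(\Omega)}^q$ and using the equivalent norm $\lVert u\rVert_\psi^{0,\mathcal{H}}=\lVert \derivada u\rVert_\psi^{\mathcal{H}}$, part (iv) gives, whenever $\lVert u\rVert_\psi^{0,\mathcal{H}}>1$,
\[
\lVert \derivada u\rVert_{L^p(\Omega)}^p+\lVert \derivada u\rVert_{L_\mu^q(\Omega)}^q=\rho^{\mathcal{H}}(\derivada u)\ge \bigl(\lVert u\rVert_\psi^{0,\mathcal{H}}\bigr)^p.
\]
Putting $c=\min\{\tfrac1p-\tfrac1r,\tfrac1q-\tfrac1r\}>0$ and combining the three displays, for $\lVert u\rVert_\psi^{0,\mathcal{H}}>1$ one obtains
\[
\mathbf{E}_\lambda(u)\ge c\,\bigl(\lVert u\rVert_\psi^{0,\mathcal{H}}\bigr)^p-C_2\,\bigl(\lVert u\rVert_\psi^{0,\mathcal{H}}\bigr)^{1-\gamma},
\]
and since $p>1>1-\gamma$ the right-hand side diverges to $+\infty$ as $\lVert u\rVert_\psi^{0,\mathcal{H}}\to+\infty$, which is precisely the asserted coercivity of $\mathbf{E}_\lambda|_{\mathbf{N}_\lambda}$.

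The computations are routine; the one point that needs care is the passage from the modular $\rho^{\mathcal{H}}$ to a genuine power of the norm. Because the Musielak--Orlicz norm is not homogeneous with a single exponent, this step relies on the two-sided estimate of Proposition \ref{Proposition2.2}, which forces me to restrict to $\lVert u\rVert_\psi^{0,\mathcal{H}}>1$ (harmless for coercivity) and to use the sign conditions $p<q<r$ and $0<\gamma<1$ carefully to fix the signs of all coefficients after the substitution.
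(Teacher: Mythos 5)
Your proposal is correct and follows essentially the same route as the paper: both substitute the Nehari constraint to eliminate the $\lambda\lVert u\rVert_{L^r(\Omega)}^r$ term, bound the gradient terms below by $\bigl(\tfrac1q-\tfrac1r\bigr)\rho^{\mathcal{H}}(\derivada u)\geq c_1\bigl(\lVert u\rVert_{\psi}^{0,\mathcal{H}}\bigr)^p$ via Proposition \ref{Proposition2.2}(iv) for $\lVert u\rVert_{\psi}^{0,\mathcal{H}}>1$, control the singular term by $c_2\bigl(\lVert u\rVert_{\psi}^{0,\mathcal{H}}\bigr)^{1-\gamma}$ using H\"older's inequality and the embedding, and conclude from $1-\gamma<1<p$.
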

\begin{proof}
Consider $u\in\mathbf{N}_\lambda$ with $\lVert u\rVert_{\psi}^{\mathcal{H},0}>1$. From the definition of the Nehari manifold $\mathbf{N}_\lambda$ we have
\textcolor{blue}{\begin{equation}\label{eq31}
-\frac{\lambda}{r}\lVert u\rVert_{L^{r}}^{r} = -\frac{1}{r}\left(\lVert \derivada u\lVert_{L^{p}}^{p} + \lVert \derivada u\lVert_{L_{\mu}^{q}}^{q}\right)+\frac{1}{r}\displaystyle\int_\Omega a(x)|u|^{1-\gamma}\mathrm{d}x.
\end{equation}}

Combining (\ref{eq31}) with the definition of $\mathbf{E}_\lambda$ and applying (iv) of  Proposition \ref{Proposition2.2}  along with Theorem 13.17 of Hewitt-Stromberg \cite{Hewitt}, one has
\begin{equation*}
\begin{split}
    \mathbf{E}_\lambda &= \left( \frac{1}{p}-\frac{1}{r} \right)\lVert \derivada u\lVert_{L^{p}}^{p} + \left( \frac{1}{q}-\frac{1}{r} \right)\lVert \derivada u\lVert_{L_{\mu}^{q}}^{q}+\left( \frac{1}{r}-\frac{1}{1-\gamma} \right)\displaystyle\int_\Omega a(x)|u|^{1-\gamma}\mathrm{d}x\\
    &\geq \left( \frac{1}{q}-\frac{1}{r} \right)\rho^{\mathcal{H}}(\derivada u) + \left( \frac{1}{r}-\frac{1}{1-\gamma} \right)\displaystyle\int_\Omega a(x)|u|^{1-\gamma}\mathrm{d}x\\
    & \geq c_1\lVert u\rVert_{\psi}^{\mathcal{H},0,p}-c_2\lVert u\rVert_{\psi}^{\mathcal{H},0,1-\gamma},
\end{split}
\end{equation*}
for some $c_1,c_2>0$, because $p<q<r$. Hence, due to the inequality $1-\gamma<1<p$, the assertion of the proposition follows.
\end{proof}

The result below will be needed.
\begin{proposition}\label{p32}
Suppose that  {\rm\textbf{(H)}}  is satisfied and consider that $\mathbf{N}_{\lambda}^+\neq \emptyset$. If $m_\lambda^+ = \inf_{\mathbf{N}_{\lambda}^+}\mathbf{E}_\lambda$, then $m_{\lambda}^+<0$.
\end{proposition}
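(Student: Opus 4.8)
The plan is to show the stronger pointwise statement that $\mathbf{E}_\lambda(u)<0$ for \emph{every} $u\in\mathbf{N}_\lambda^+$; since $\mathbf{N}_\lambda^+\neq\emptyset$ by hypothesis, fixing any $u_0\in\mathbf{N}_\lambda^+$ gives $m_\lambda^+=\inf_{\mathbf{N}_\lambda^+}\mathbf{E}_\lambda\le\mathbf{E}_\lambda(u_0)<0$, which is the assertion. So I would fix $u\in\mathbf{N}_\lambda^+$ and, to lighten notation, write $A=\lVert\derivada u\rVert_{L^p}^p$, $B=\lVert\derivada u\rVert_{L_\mu^q}^q$ and $D=\lVert u\rVert_{L^r}^r$, noting that $A>0$ because $u\neq0$ and $\lVert u\rVert_\psi^{0,\mathcal{H}}=\lVert\derivada u\rVert_\psi^{\mathcal H}$ is an equivalent norm.

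First I would use the Nehari constraint $w'(1)=0$, namely $A+B=\int_\Omega a(x)|u|^{1-\gamma}\mathrm{d}x+\lambda D$, to eliminate the singular term from $\mathbf{E}_\lambda(u)$. Substituting $\int_\Omega a(x)|u|^{1-\gamma}\mathrm{d}x=A+B-\lambda D$ into the definition of $\mathbf{E}_\lambda$ yields
\begin{equation*}
\mathbf{E}_\lambda(u)=\left(\frac1p-\frac{1}{1-\gamma}\right)A+\left(\frac1q-\frac{1}{1-\gamma}\right)B+\lambda\left(\frac{1}{1-\gamma}-\frac1r\right)D .
\end{equation*}
The last term is nonnegative since $1-\gamma<1<r$. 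Then I would invoke the defining inequality of $\mathbf{N}_\lambda^+$, i.e. $w''(1)>0$, written as $\lambda(r+\gamma-1)D<(p+\gamma-1)A+(q+\gamma-1)B$, which is permissible because $r+\gamma-1>0$. Multiplying the estimate $\lambda D<\frac{(p+\gamma-1)A+(q+\gamma-1)B}{r+\gamma-1}$ by the positive factor $\big(\frac{1}{1-\gamma}-\frac1r\big)$ gives a strict upper bound for the final term.

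After this substitution, the coefficients of $A$ and of $B$ collapse into the factored forms
\begin{equation*}
\frac{(r-p)\,(1-\gamma-p)}{p\,r\,(1-\gamma)}\quad\text{and}\quad \frac{(r-q)\,(1-\gamma-q)}{q\,r\,(1-\gamma)},
\end{equation*}
so that $\mathbf{E}_\lambda(u)$ is strictly less than the sum of these coefficients times $A$ and $B$ respectively. Both coefficients are strictly negative because $1-\gamma<1<p<q<r$ forces $r-p,\,r-q>0$ while $1-\gamma-p,\,1-\gamma-q<0$. Since $A>0$ and $B\ge0$, this gives $\mathbf{E}_\lambda(u)<0$, completing the argument. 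The only real work is the bookkeeping that produces these two factorizations, and the entire sign conclusion rests on the ordering $1-\gamma<p<q<r$ supplied by hypothesis \textbf{(H)}; there is no analytic obstacle, since nonemptiness of $\mathbf{N}_\lambda^+$ is assumed and the infimum is automatically dominated by any single negative value.
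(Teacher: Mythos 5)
Your argument is correct and takes essentially the same route as the paper's own proof: both use the Nehari constraint to eliminate the singular integral from $\mathbf{E}_\lambda(u)$, then invoke the strict inequality defining $\mathbf{N}_\lambda^+$ to bound $\lambda\lVert u\rVert_{L^r}^r$, and conclude from the negativity of the resulting coefficients under $1-\gamma<1<p<q<r$. Your factored coefficients $\tfrac{(r-p)(1-\gamma-p)}{pr(1-\gamma)}$ and $\tfrac{(r-q)(1-\gamma-q)}{qr(1-\gamma)}$ are algebraically identical to the paper's $\tfrac{p+\gamma-1}{1-\gamma}\bigl(\tfrac{1}{r}-\tfrac{1}{p}\bigr)$ and $\tfrac{q+\gamma-1}{1-\gamma}\bigl(\tfrac{1}{r}-\tfrac{1}{q}\bigr)$, so the two computations coincide.
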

\begin{proof}
Let $u\in \mathbf{N}_{\lambda}^+$. First, note that $\mathbf{N}_{\lambda}^+\subseteq \mathbf{N}_{\lambda}$, so we have 
\begin{equation}\label{eq32}
    -\frac{1}{1-\gamma} \displaystyle\int_\Omega a(x)|u|^{1-\gamma}\mathrm{d}x = -\frac{1}{1-\gamma}\left(  \lVert \derivada u\lVert_{L^{p}}^{p} + \lVert \derivada u\lVert_{L_{\mu}^{q}}^{q}\right)+  \frac{\lambda}{1-\gamma}\lVert u\rVert_{L^{r}}^{r}. 
\end{equation}

On the other hand, from the definition of $\mathbf{N}_{\lambda}^+$ yields
\begin{equation}\label{eq33}
     \lambda\lVert u\rVert_{L^{r}}^{r}< \frac{p+\gamma-1}{r+\gamma-1}\lVert \derivada u\lVert_{L^{p}}^{p} + \frac{q+\gamma-1}{r+\gamma-1}\lVert \derivada u\lVert_{L_{\mu}^{q}}^{q}.
\end{equation}

\textcolor{red}{Using Eq.(\ref{eq32}) and Eq.(\ref{eq33}), one has}
\textcolor{blue}{
\begin{align*}
    \mathbf{E}_\lambda (u)= &\frac{1}{p}\lVert \derivada u\lVert_{L^{p}}^{p}+ \frac{1}{q}\lVert \derivada u\lVert_{L_{\mu}^{q}}^{q}-\frac{1}{1-\gamma}\displaystyle\int_\Omega a(x)|u|^{1-\gamma}\mathrm{d}x -\frac{\lambda}{r}\lVert u\rVert_{L^{r}(\Omega)}^{r}\\
     =&\frac{1}{p}\lVert \derivada u\lVert_{L^{p}}^{p}+ \frac{1}{q}\lVert \derivada u\lVert_{L_{\mu}^{q}}^{q} - \frac{1}{1-\gamma}\left(\lVert \derivada u\lVert_{L^{p}}^{p}+ \lVert \derivada u\lVert_{L_{\mu}^{q}}^{q}\right)\\
    & + \frac{\lambda}{1-\gamma}\lVert u\rVert_{L^{r}(\Omega)}^{r}-\frac{\lambda}{r}\lVert u\rVert_{L^{r}(\Omega)}^{r}\\
    =& \left( \frac{1}{p}-\frac{1}{1-\gamma}\right)\lVert\derivada u\rVert_{L^{p}}^{p}+\left( \frac{1}{q}-\frac{1}{1-\gamma}\right)\lVert \derivada u\rVert_{L_{\mu}^{q}}^{q} + \lambda\left(\frac{1}{1-\gamma}-\frac{1}{r}\right) ||u||^{r}_{L^{r}(\Omega)}\\
    \leq& \left[ \frac{1-\gamma-p}{p(1-\gamma)}+\left(\frac{-1+\gamma+r}{r(1-\gamma)}\right)\frac{p+\gamma-1}{r+\gamma-1} \right]\lVert \derivada u\lVert_{L^{p}}^{p}\\ &+\left[ \frac{1-\gamma-q}{q(1-\gamma)}+\left(\frac{-1+\gamma+r}{r(1-\gamma)}\right)\frac{q+\gamma-1}{r+\gamma+1} \right]\lVert \derivada u\lVert_{L_{\mu}^{q}}^{q}\\
    =& \frac{p+\gamma-1}{1-\gamma}\left( \frac{1}{r}-\frac{1}{p}\right)\lVert \derivada u\lVert_{L^{p}}^{p}+  \frac{q+\gamma-1}{1-\gamma}\left( \frac{1}{r}-\frac{1}{q}\right)\lVert \derivada u\lVert_{L_{\mu}^{q}}^{q},\\
    <& 0,
\end{align*}}
since $p<q<r$. Hence, $\mathbf{E}_\lambda\lvert_{\mathbf{N}_{\lambda}^+}<0$, and so $m_{\lambda}^+<0$.
\end{proof}

\begin{proposition}\label{p33}
Suppose that  {\rm\textbf{(H)}} is satisfied. Then there exists $\lambda^*>0$ such that ${\bf N}_\lambda^0=\emptyset$ for all $\lambda \in (0,\lambda^*)$.
\end{proposition}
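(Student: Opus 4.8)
The plan is to argue by contradiction. Suppose that for some $\lambda>0$ there exists $u\in\mathbf{N}_\lambda^0$, and abbreviate $A=\lVert\derivada u\rVert_{L^p}^p$, $B=\lVert\derivada u\rVert_{L_\mu^q}^q$, $C=\int_\Omega a(x)|u|^{1-\gamma}\mathrm{d}x$ and $D=\lVert u\rVert_{L^r}^r$, so that $A+B=\rho^{\mathcal H}(\derivada u)$ by \eqref{pl}. Since $\mathbf{N}_\lambda^0\subseteq\mathbf{N}_\lambda$, the membership $u\in\mathbf{N}_\lambda$ gives $A+B=C+\lambda D$, while the defining identity of $\mathbf{N}_\lambda^0$ reads $(p+\gamma-1)A+(q+\gamma-1)B=\lambda(r+\gamma-1)D$. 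Multiplying the first identity by $(r+\gamma-1)$ and using the second to eliminate $\lambda(r+\gamma-1)D$, I would obtain the key relation
\begin{equation*}
C=\frac{(r-p)A+(r-q)B}{r+\gamma-1},
\end{equation*}
which is meaningful and positive since $r+\gamma-1>0$ and $r-q,\,r-p>0$ by hypothesis (ii).

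Next I would extract two competing estimates. Writing $\lVert u\rVert:=\lVert u\rVert_\psi^{0,\mathcal H}$, the embeddings of Proposition \ref{Proposition2.1} together with $a\in L^\infty(\Omega)$ give $C\le c_0\lVert u\rVert^{1-\gamma}$ and $D\le S_r\lVert u\rVert^{r}$ for constants $c_0,S_r>0$ independent of $u$ and $\lambda$. From the key relation and the elementary inequality $(r-q)(A+B)\le(r-p)A+(r-q)B$ one gets $\tfrac{r-q}{r+\gamma-1}\rho^{\mathcal H}(\derivada u)\le C\le c_0\lVert u\rVert^{1-\gamma}$; comparing $\rho^{\mathcal H}(\derivada u)$ with $\lVert u\rVert$ through Proposition \ref{Proposition2.2}(iii)--(iv) and using $p+\gamma-1>0$, this yields an a priori bound $\lVert u\rVert\le M$ with $M$ independent of $\lambda$. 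On the other hand, from $(p+\gamma-1)\rho^{\mathcal H}(\derivada u)\le(p+\gamma-1)A+(q+\gamma-1)B=\lambda(r+\gamma-1)D$ and $D\le S_r\lVert u\rVert^r$ I would deduce
\begin{equation*}
\lambda\ge\frac{p+\gamma-1}{r+\gamma-1}\,\frac{\rho^{\mathcal H}(\derivada u)}{D}\ge\frac{p+\gamma-1}{(r+\gamma-1)S_r}\,\frac{\rho^{\mathcal H}(\derivada u)}{\lVert u\rVert^{r}}.
\end{equation*}

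Finally I would bound $\rho^{\mathcal H}(\derivada u)/\lVert u\rVert^{r}$ below by a positive constant, distinguishing $\lVert u\rVert\le1$ and $1<\lVert u\rVert\le M$: in the first case Proposition \ref{Proposition2.2}(iii) gives $\rho^{\mathcal H}(\derivada u)\ge\lVert u\rVert^{q}$, whence $\rho^{\mathcal H}(\derivada u)/\lVert u\rVert^{r}\ge\lVert u\rVert^{q-r}\ge1$ (as $q<r$), while in the second Proposition \ref{Proposition2.2}(iv) gives $\rho^{\mathcal H}(\derivada u)\ge\lVert u\rVert^{p}$, so $\rho^{\mathcal H}(\derivada u)/\lVert u\rVert^{r}\ge\lVert u\rVert^{p-r}\ge M^{p-r}>0$ (as $p<r$ and $\lVert u\rVert\le M$). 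Combining, every $u\in\mathbf{N}_\lambda^0$ forces $\lambda\ge\lambda^*$ for an explicit $\lambda^*>0$ assembled from $p,q,r,\gamma,S_r$ and $M$, and taking the contrapositive shows $\mathbf{N}_\lambda^0=\emptyset$ for all $\lambda\in(0,\lambda^*)$. The main obstacle is securing the $\lambda$-independent upper bound $M$: because the singular term destroys the scale invariance of $\mathbf{N}_\lambda^0$, this bound cannot come from homogeneity and must instead be produced by the elimination identity together with the sublinear growth $1-\gamma<p$, and one must verify that every constant entering $\lambda^*$ is genuinely uniform in $u$.
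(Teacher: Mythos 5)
Your proposal is correct and follows essentially the same route as the paper: both proofs combine the $\mathbf{N}_\lambda$ and $\mathbf{N}_\lambda^0$ identities to eliminate the $\lambda\lVert u\rVert_{L^r}^r$ term and obtain $(r-p)A+(r-q)B=(r+\gamma-1)C$, then use the embeddings together with Proposition \ref{Proposition2.2}(iii)--(iv) to get a $\lambda$-independent bound $\lVert u\rVert\le M$, and finally use the $\mathbf{N}_\lambda^0$ identity again to force a contradiction for small $\lambda$. The only difference is packaging: the paper lets $\lambda\to 0^+$ and derives $\lVert u\rVert\to+\infty$ against the uniform bound, whereas you extract an explicit threshold $\lambda^*$ and take the contrapositive, which is the same argument in quantitative form.
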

\begin{proof}
Suppose that for every $\lambda^*>0$ there exists $\lambda \in (0,\lambda^*)$ such that ${\bf N}_\lambda^0\neq\emptyset$. Hence, for any given $\lambda >0$, we can find $u \in {\bf N}_\lambda^0 $ such that
\begin{equation}\label{eq34}
  (p+\gamma-1)  \lVert \derivada u\lVert_{L^{p}}^{p} + (q+\gamma-1)\lVert \derivada u\lVert_{L_{\mu}^{q}}^{q} = \lambda(r+\gamma-1)\lVert u\rVert_{L^{r}}^{r}
\end{equation}
Since $u\in {\bf N}_\lambda$, one also has
\begin{eqnarray}\label{eq35}
      (r+\gamma-1)  \lVert \derivada u\lVert_{L^{p}}^{p} + (r+\gamma-1)\lVert \derivada u\lVert_{L_{\mu}^{q}}^{q} &=  &(r+\gamma-1)\displaystyle\int_\Omega a(x)|u|^{1-\gamma}\mathrm{d}x\nonumber\\&+& \lambda(r+\gamma-1)\lVert u\rVert_{L^{r}}^{r}.
\end{eqnarray}

\textcolor{red}{ Using Eq.(\ref{eq34}) and Eq.(\ref{eq35}), yields}
\begin{equation}\label{eq36}
    (r-p)\lVert \derivada u\lVert_{L^{p}}^{p} + (r-q)\lVert \derivada u\lVert_{L_{\mu}^{q}}^{q} = (r+\gamma -1)\displaystyle\int_\Omega a(x)|u|^{1-\gamma}\mathrm{d}x.
\end{equation}

From (ii) and (iv) of Proposition \ref{Proposition2.2}  along with Theorem 13.17 of Hewitt - Stromberg and (ii) of Proposition \ref{Proposition2.1} , we get from Eq.(\ref{eq36}), that
\begin{equation*}
    \min\left\{\lVert u \rVert_{\mathbb{H}^{\alpha,\beta;\psi}_{\mathcal{H},0}}^{p},\lVert u \rVert_{\mathbb{H}^{\alpha,\beta;\psi}_{\mathcal{H},0}}^{q}\right\}\leq c_3\lVert u \rVert_{\mathbb{H}^{\alpha,\beta;\psi}_{\mathcal{H},0}}^{1-\gamma},
\end{equation*}
for some $c_3>0$ due to the fact that $1-\gamma<1<p<q<r$. Hence,
\begin{equation}\label{eq37}
    \lVert u \rVert_{\mathbb{H}^{\alpha,\beta;\psi}_{\mathcal{H},0}}\leq c_4,
\end{equation}
for some $c_4>0$.

Furthermore, from Eq.(\ref{eq34}),  Proposition \ref{Proposition2.2} (ii), (iv) and Proposition \ref{Proposition2.1} (ii), we have 
 \begin{equation*}
    \min\left\{\lVert u \rVert_{\mathbb{H}^{\alpha,\beta;\psi}_{\mathcal{H},0}}^{p},\lVert u \rVert_{\mathbb{H}^{\alpha,\beta;\psi}_{\mathcal{H},0}}^{q}\right\}\leq \lambda c_5\lVert u \rVert_{\mathbb{H}^{\alpha,\beta;\psi}_{\mathcal{H},0}}^{r},
\end{equation*}
for some $c_5>0$. Consequently,
\begin{equation*}
     \lVert u \rVert_{\mathbb{H}^{\alpha,\beta;\psi}_{\mathcal{H},0}}\geq \left(\frac{1}{\lambda c_5}\right)^{\frac{1}{r-p}} \mbox{ or }   \lVert u \rVert_{\mathbb{H}^{\alpha,\beta;\psi}_{\mathcal{H},0}}\geq \left(\frac{1}{\lambda c_5}\right)^{\frac{1}{r-q}}.
\end{equation*}
If $\lambda \rightarrow 0^+$,  then $\lVert u \rVert_{\mathbb{H}^{\alpha,\beta;\psi}_{\mathcal{H},0}} \rightarrow +\infty$, due to the inequality $p<q<r$, which contradicts Eq.(\ref{eq37}).
\end{proof}

\begin{proposition}\label{p34}
Suppose that  \rm\textbf{(H)} is  satisfied. Then there exists $\hat{\lambda}^* \in (0,\lambda^*]$ such that $\mathbf{N}_\lambda^{\pm}\neq \emptyset$ for all $\lambda \in (0,\hat{\lambda}^*)$. In addition, for any $\lambda \in (0,\hat{\lambda}^*)$, there exists $u^*\in \mathbf{N}_\lambda^{*}$ such that $\mathbf{E}_\lambda(u^*)=m_{\lambda}^{+}<0$ and $u^*(x)\geq 0$ for a.a. $x\in \Omega$.
\end{proposition}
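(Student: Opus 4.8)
The plan is to combine the fibering method with the direct method of the calculus of variations, building on the structure already established in Propositions \ref{p31}--\ref{p33}.

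For the non-emptiness of $\mathbf{N}_\lambda^{\pm}$ I would fix $u\neq 0$ (so that $\lVert\derivada u\rVert_{L^p}>0$, since $\lVert u\rVert_{\hzero}$ is equivalent to $\lVert\derivada u\rVert_{\psi}^{\mathcal H}$) and study the auxiliary map
\begin{equation*}
\phi_u(t)=t^{p+\gamma-1}\lVert\derivada u\rVert_{L^p}^{p}+t^{q+\gamma-1}\lVert\derivada u\rVert_{L_\mu^q}^{q}-\lambda t^{r+\gamma-1}\lVert u\rVert_{L^r}^{r},\qquad t>0,
\end{equation*}
obtained from $t^{\gamma}w_u'(t)$ after restoring the singular term, so that $tu\in\mathbf{N}_\lambda$ if and only if $\phi_u(t)=\into a(x)|u|^{1-\gamma}\mathrm{d}x=:D_u>0$. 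A sign analysis of $\phi_u'$ (the two positive terms dominate near $0$, the negative $r$-term dominates at infinity, and the derivative changes sign exactly once because $r>q>p$) shows that $\phi_u$ is unimodal with $\phi_u(0^+)=0$, a unique maximum $M_u(\lambda)$, and $\phi_u(+\infty)=-\infty$. Since $\lambda\mapsto\phi_u(t)$ is decreasing for each fixed $t$, the value $M_u(\lambda)$ is nonincreasing in $\lambda$ and $M_u(\lambda)\to+\infty$ as $\lambda\to0^+$; hence there is $\lambda_0>0$ with $M_u(\lambda)>D_u$ for all $\lambda\in(0,\lambda_0)$. For such $\lambda$ the equation $\phi_u(t)=D_u$ has exactly two roots $t_1<t_2$, and differentiating $w_u'(t)=t^{-\gamma}(\phi_u(t)-D_u)$ shows that $w_u''(t_i)$ has the sign of $\phi_u'(t_i)$, so $t_1u\in\mathbf{N}_\lambda^+$ and $t_2u\in\mathbf{N}_\lambda^-$. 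Taking $\hat\lambda^*=\min\{\lambda_0,\lambda^*\}$ with $\lambda^*$ from Proposition \ref{p33} gives $\mathbf{N}_\lambda^{\pm}\neq\emptyset$ for all $\lambda\in(0,\hat\lambda^*)$.

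For the second assertion I would minimise $\mathbf{E}_\lambda$ over $\mathbf{N}_\lambda^+$; by Proposition \ref{p32} we already know $m_\lambda^+<0$. Let $\{u_n\}\subset\mathbf{N}_\lambda^+$ be a minimising sequence; replacing $u_n$ by $|u_n|$ (which leaves the $L^{1-\gamma}$- and $L^r$-terms unchanged and does not increase the fractional-gradient terms) we may assume $u_n\geq0$. By Proposition \ref{p31} the sequence is bounded in $\hzero$, so along a subsequence $u_n\weak u^*$; by the compact embedding of Proposition \ref{Proposition2.1}(iii) we get $u_n\to u^*$ in $L^r(\Om)$ and a.e., whence $\into a(x)|u_n|^{1-\gamma}\mathrm{d}x\to\into a(x)|u^*|^{1-\gamma}\mathrm{d}x$ (using $a\in L^\infty$ and $0<1-\gamma<1$). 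The limit is nontrivial: if $u^*=0$ then both lower-order terms vanish and $\mathbf{E}_\lambda(u_n)\geq-o(1)$, forcing $\liminf_n\mathbf{E}_\lambda(u_n)\geq0$ and contradicting $m_\lambda^+<0$. Moreover $u^*\geq0$ as an a.e. limit of nonnegative functions.

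The main obstacle is to show $u^*\in\mathbf{N}_\lambda^+$ with the infimum attained, i.e. that the weak convergence upgrades to strong convergence of the gradient part. By weak lower semicontinuity of $\lVert\derivada\cdot\rVert_{L^p}$ and $\lVert\derivada\cdot\rVert_{L_\mu^q}$ together with the strong convergence of the lower-order terms, the Nehari identity passes to the limit as $w_{u^*}'(1)\leq0$ and $\mathbf{E}_\lambda(u^*)\leq m_\lambda^+$. Suppose $w_{u^*}'(1)<0$; since $u^*\neq0$, unimodality of $\phi_{u^*}$ yields a unique $t_+>0$ with $t_+u^*\in\mathbf{N}_\lambda^+$, and the strict sign forces $t_+>1$, so that $w_{u^*}$ is decreasing on $(0,t_+)$ and $\mathbf{E}_\lambda(t_+u^*)=w_{u^*}(t_+)<w_{u^*}(1)=\mathbf{E}_\lambda(u^*)\leq m_\lambda^+$, contradicting $t_+u^*\in\mathbf{N}_\lambda^+$. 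The complementary configuration, in which $t=1$ would lie beyond the larger root, is ruled out by the uniform norm bound on $\mathbf{N}_\lambda^+$ extracted from $w''(1)>0$ and the embeddings (which prevents a weak limit of $\mathbf{N}_\lambda^+$-elements from being that large). Hence $w_{u^*}'(1)=0$, so $u^*\in\mathbf{N}_\lambda$; as $\mathbf{N}_\lambda^0=\emptyset$ for $\lambda<\lambda^*$ and $u^*$ is a limit of $\mathbf{N}_\lambda^+$-elements with negative energy, we conclude $u^*\in\mathbf{N}_\lambda^+$ and $m_\lambda^+\leq\mathbf{E}_\lambda(u^*)\leq m_\lambda^+$, giving $\mathbf{E}_\lambda(u^*)=m_\lambda^+<0$ and strong convergence a posteriori. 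I expect the genuinely delicate points to be precisely the exclusion of this degenerate ray configuration and the verification that passing to $|u_n|$ does not increase the $\psi$-Hilfer gradient norms; both rest on structural properties of the space $\hzero$ rather than on the variational scheme itself.
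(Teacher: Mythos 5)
Your skeleton (fibering map, two roots on each ray, minimization over $\mathbf{N}_\lambda^+$ using Propositions \ref{p31}, \ref{p32} and the compact embedding of Proposition \ref{Proposition2.1}) matches the paper's, but two load-bearing steps are missing or would fail. The first and most serious gap is that your threshold is not uniform in $u$: you fix one $u\neq 0$ and obtain $\lambda_0$ with $M_u(\lambda)>D_u$ for $\lambda\in(0,\lambda_0)$, and this $\lambda_0$ depends on $u$. For bare non-emptiness of $\mathbf{N}_\lambda^{\pm}$ that is harmless, but your attainment argument later invokes the two-root picture for the weak limit, namely ``unimodality of $\phi_{u^*}$ yields a unique $t_+>0$ with $t_+u^*\in\mathbf{N}_\lambda^+$''. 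Since $u^*$ is not known in advance, nothing guarantees $\lambda<\lambda_0(u^*)$; for the given $\lambda$ the ray $\{tu^*\}$ may miss $\mathbf{N}_\lambda$ entirely, and then your contradiction argument has nothing to stand on. The paper's first half exists precisely to remove this dependence: it maximizes $\hat{\Phi}_u(t)=t^{p-r}\lVert\derivada u\rVert_{L^p(\Omega)}^p-t^{-r-\gamma+1}\into a(x)|u|^{1-\gamma}\mathrm{d}x$ explicitly (Eq.(\ref{eq38})), and then the Sobolev estimate Eq.(\ref{eq39}) together with Eq.(\ref{eq310}) gives $\hat{\Phi}_u(\hat{t}_0)\geq C\lVert u\rVert_{L^{p^*}(\Omega)}^{\theta}$ with $\theta=\frac{p(r+\gamma-1)-(1-\gamma)(r-p)}{p+\gamma-1}=r$, while $\lambda\lVert u\rVert_{L^r(\Omega)}^r\leq \lambda c_7\lVert u\rVert_{L^{p^*}(\Omega)}^{r}$. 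Because both bounds carry the same power $r$, the inequality $\hat{\Phi}_u(\hat{t}_0)-\lambda\lVert u\rVert_{L^r(\Omega)}^r>0$ holds for \emph{every} $u\neq 0$ once $\lambda$ is below a constant independent of $u$; this uniform $\hat{\lambda}^*$ is the actual content of the first assertion and is exactly what is later applied to $u^*$. Your proposal never produces it.

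Second, the attainment step itself. After weak lower semicontinuity gives $w'_{u^*}(1)\leq 0$, you must exclude the configuration in which $t=1$ lies beyond the larger root, and you must also rule out $u^*\in\mathbf{N}_\lambda^-$; your appeal to a ``uniform norm bound on $\mathbf{N}_\lambda^+$'' does neither, since a bound on $\lVert u_n\rVert$ says nothing about where $1$ sits relative to the roots of $\phi_{u^*}$, and semicontinuity passes $w''_{u_n}(1)>0$ to the limit in the wrong direction (the limiting quantity is $\leq\liminf$, not $\geq$), so weak convergence alone cannot exclude $u^*\in\mathbf{N}_\lambda^-$. The paper's mechanism is different and is the second key idea you are missing: it proves the Claim $\liminf_{n}\rho^{\mathcal{H}}(u_n)=\rho^{\mathcal{H}}(u^*)$ by contradiction --- a modular gap would force $w'_{u_n}(t_1)>0$ for large $n$, where $t_1$ is the first root for $u^*$ (available only thanks to the uniform $\hat{\lambda}^*$), hence $t_1>1$, hence $\mathbf{E}_\lambda(t_1u^*)\leq\mathbf{E}_\lambda(u^*)<m_\lambda^+$ with $t_1u^*\in\mathbf{N}_\lambda^+$, absurd --- and then Proposition \ref{Proposition2.2}(v) upgrades weak to strong convergence in $\hzero(\Omega)$, after which the defining inequality of $\mathbf{N}_\lambda^+$ passes to the limit with ``$\geq$'' and Proposition \ref{p33} excludes equality. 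Your instinct that strong convergence of the gradient part is the crux is correct, but the argument you sketch does not deliver it. A smaller point: replacing $u_n$ by $|u_n|$ along the minimizing sequence requires $|u_n|\in\mathbf{N}_\lambda^+$, which is not automatic for the $\psi$-Hilfer derivative; the paper sidesteps this by passing to $|u^*|$ only at the very end.
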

\begin{proof}
Let $u\in\hzero(\Omega)\setminus \{0\}$ and consider the function $\hat{\Phi}_u:(0,+\infty) \rightarrow \mathbb{R}$ defined by 
\begin{equation*}
    \hat{\Phi}_u(t) = t^{p-r}\lVert\derivada u \rVert_{L^{p}(\Omega)}^{p} - t^{-r-\gamma+1}\displaystyle\int_\Omega a(x)|u|^{1-\gamma}\mathrm{d}x.
\end{equation*}
\end{proof}

Since $r-p<r+\gamma-1$, we can find $\hat{t}_0>0$ such that $  \hat{\Phi}_u(\hat{t}_0) = \max_{t>0} \hat{\Phi}_u(t)$. 
Thus, $\hat{\Phi}'_u(\hat{t}_0)=0$, i.e.
\begin{equation*}
    (p-r)\hat{t}_0^{p-r-1}\lVert \derivada u \rVert_{L^{p}(\Omega)}^{p}+(r+\gamma-1)\hat{t}_0^{-r-\gamma}\displaystyle\int_\Omega a(x)|u|^{1-\gamma}\mathrm{d}x = 0.
\end{equation*}

\textcolor{blue}{Hence
\begin{equation*}
    \hat{t}_0 = \left(\frac{r+\gamma-1)\displaystyle\int_\Omega a(x)|u|^{1-\gamma}\mathrm{d}x}{(r-p)\lVert \derivada u \rVert_{L^{p}(\Omega)}^{p}}\right)^\frac{1}{p+\gamma-1}.
\end{equation*}
Furthermore, we have
\begin{equation}\label{eq38}
    \begin{split}
        \hat{\Phi}_u(\hat{t}_0) = &\left(\frac{(r-p)\lVert \derivada u \rVert_{L^{p}(\Omega)}^{p}}{(r+\gamma-1)\displaystyle\int_\Omega a(x)|u|^{1-\gamma}\mathrm{d}x}\right)^{\frac{r-p}{p+\gamma-1}}\lVert \derivada u \rVert_{L^{p}(\Omega)}^{p}\\
        & - \left(\frac{(r-p)\lVert \derivada u \rVert_{L^{p}(\Omega)}^{p}}{(r+\gamma-1)\displaystyle\int_\Omega a(x)|u|^{1-\gamma}\mathrm{d}x}\right)^{\frac{r+\gamma-1}{p+\gamma-1}}\displaystyle\int_\Omega a(x)|u|^{1-\gamma}\mathrm{d}x\\
        = & \frac{ (r-p)^\frac{r-p}{p+\gamma-1}\lVert \derivada u \rVert_{L^{p}(\Omega)}^{\frac{p(r+\gamma-1)}{p+\gamma-1}}}{(r+\gamma-1)^\frac{r-p}{p+\gamma-1}\left(\displaystyle\int_\Omega a(x)|u|^{1-\gamma}\mathrm{d}x\right)^{\frac{r-p}{p+\gamma-1}}} \\
        & - \frac{ (r-p)^\frac{r+\gamma-1}{p+\gamma-1}\lVert \derivada u \rVert_{L^{p}(\Omega)}^{\frac{p(r+\gamma-1)}{p+\gamma-1}}}{(r+\gamma-1)^\frac{r+\gamma-1}{p+\gamma-1}\left(\displaystyle\int_\Omega a(x)|u|^{1-\gamma}\mathrm{d}x\right)^{\frac{r-p}{p+\gamma-1}}}\\
        = & \frac{p+\gamma-1}{r-p}\left(\frac{r-p}{r+\gamma-1}\right)^{\frac{r+\gamma-1}{p+\gamma-1}}\frac{\lVert \derivada u \rVert_{L^{p}(\Omega)}^{\frac{p(r+\gamma-1)}{p+\gamma-1}}}{\left(\displaystyle\int_\Omega a(x)|u|^{1-\gamma}\mathrm{d}x\right)^{\frac{r-p}{p+\gamma-1}}}.
    \end{split}
\end{equation}}

Let $S$ be the best constant of the Sobolev embedding $\mathbb{H}_p^{\alpha, \beta;\psi}(\Omega)\rightarrow L^{p*}(\Omega)$, that is
\begin{equation}\label{eq39}
    S\lVert u\rVert_{L^{p*}(\Omega)}^p\leq \lVert \derivada \rVert_{L^{p}(\Omega)}^p.
\end{equation}

Moreover, we have
\begin{equation}\label{eq310}
    \displaystyle\int_\Omega a(x)|u|^{1-\gamma}\mathrm{d}x\leq c_6\lVert u\rVert_{L^{p*}(\Omega)}^{1-\gamma},
\end{equation}
for some $c_6>0$. From Eq.(\ref{eq38}), Eq.(\ref{eq39}) and Eq.(\ref{eq310}), we obtain
\begin{eqnarray*}
     \hat{\Phi}_u(\hat{t}_0) - \lambda\lVert u\rVert_{L^{r}(\Omega)}^r &=&   \frac{p+\gamma-1}{r-p}\left(\frac{r-p}{r+\gamma-1}\right)^{\frac{r+\gamma-1}{p+\gamma-1}}\frac{\lVert \derivada u \rVert_{L^{p}(\Omega)}^{\frac{p(r+\gamma-1)}{p+\gamma-1}}}{\left(\displaystyle\int_\Omega a(x)|u|^{1-\gamma}\mathrm{d}x\right)^{\frac{r-p}{p+\gamma-1}}}- \lambda\lVert u\rVert_{L^{r}(\Omega)}^r\\
     &\geq& \frac{p+\gamma-1}{r-p}\left(\frac{r-p}{r+\gamma-1}\right)^{\frac{r+\gamma-1}{p+\gamma-1}}\frac{S^{\frac{r+\gamma-1}{p+\gamma-1}}\left(\lVert u\rVert _{L^{p*}(\Omega)}^p\right)^{\frac{r+\gamma-1}{p+\gamma-1}}}{\left(c_6\lVert u\rVert _{L^{p*}(\Omega)}^{1-\gamma}\right)^{\frac{r-p}{p+\gamma-1}}}-\lambda c_7\lVert u\rVert _{L^{p*}(\Omega)}^{r}\notag\\&=& [c_{8}-c_{7}] ||u||^{r}_{L^{p^{*}}(\Omega)},
\end{eqnarray*}
for some $c_7,c_{8}>0$. Therefore, there exists $\hat{\lambda}^*\in (0, \lambda^*]$, independent of $u$, such that
\begin{equation}\label{eq311}
     \hat{\Phi}_u(\hat{t}_0) - \lambda\lVert u\rVert_{L^{r}(\Omega)}^r>0, \mbox{ for all } \lambda\in (0,\hat{\lambda}^*).
\end{equation}

Now consider the function $\Phi_u: (0,+\infty)\rightarrow \mathbb{R}$, defined by
\begin{equation*}
    \Phi_u (t)= t^{p-r}\lVert \derivada \rVert_{L^{p}(\Omega)}^p + t^{q-r}\lVert \derivada \rVert_{L_{\mu}^{q}(\Omega)}^q - t^{-r-\gamma+1}\displaystyle\int_\Omega a(x)|u|^{1-\gamma}\mathrm{d}x.
\end{equation*}
Since $r-q<r-p<r+\gamma-1$, we can find $t_0>0$ such that 
\begin{equation*}
    \Phi_u(t_0)=\max_{t>0} \Phi_u(t).
\end{equation*}
Because of $\Phi_u\geq \hat{\Phi}_u$, and due to Eq.(\ref{eq311}), we can find $\hat{\lambda}^*\in (0,\lambda^*]$ independent of $u$ such that
\begin{equation*}
    \Phi_u(t_0)-\lambda\norma[u]{r}{L^r(\Omega)}>0 \, \mbox{ for all } \lambda \in (0, \hat{\lambda}^*).
\end{equation*}
Thus, there exist $t_1<t_0<t_2$ such that $\Phi_u(t_1)=\lambda\norma[u]{r}{L^r(\Omega)}= \Phi_u(t_2) $ and 
\begin{equation}\label{eq312}
    \Phi'_u(t_2)<0<\Phi'_u(t_1),
\end{equation}
where 

\textcolor{blue}{\begin{equation}\label{eq313}
\begin{split}
      \Phi'_u(t)=(p-r)t^{p-r-1}\norma[\derivada u]{p}{L^p(\Omega)}+ (q-r)t^{q-r-1}\norma[\derivada u]{q}{L^q(\Omega)} \\- (-r-\gamma+1)t^{-r-\gamma}\displaystyle\int_\Omega a(x)|u|^{1-\gamma}\mathrm{d}x.
\end{split}
\end{equation}}

Recall that the the fibering function $w_u:[0,+\infty)\rightarrow \mathbb{R}$, defined by
\begin{equation*}
    w_u(t)=\mathbf{E}_\lambda(t_u), \, \mbox{ for all } t\geq 0.
\end{equation*}
First, we see that $w_u\in \mathrm{C}^2(0,\infty)$. I this sense, we get
\begin{equation*}
    w'_u(t_1)=t_1^{p-1}\norma[\derivada u]{p}{L^p(\Omega)} + t_1^{q-1}\norma[\derivada u]{q}{L^q(\Omega)} - t_1^{-\gamma}\displaystyle\int_\Omega a(x)|u|^{1-\gamma}\mathrm{d}x - \lambda t_1^{r-1}\norma[u]{r}{L^r(\Omega)}
\end{equation*}
and
\textcolor{blue}{\begin{eqnarray}\label{eq314}
w''_u(t_1)&=& (p-1)t_1^{p-2}\norma[\derivada u]{p}{L^p(\Omega)}+ (q-1)t_1^{q-2}\norma[\derivada u]{q}{L^q(\Omega)}\notag\\ 
      &+&\gamma t_1^{-\gamma-1}\displaystyle\int_\Omega a(x)|u|^{1-\gamma}\mathrm{d}x- \lambda(r-1)t_1^{r-2}\norma[u]{r}{L^r(\Omega)}.
\end{eqnarray}}

From Eq.(\ref{eq312}), we get
\begin{equation*}
    t_1^{p-r}\norma[\derivada u]{p}{L^p(\Omega)}+t_1^{q-r}\norma[\derivada u]{q}{L^q(\Omega)}- t_1^{-r-\gamma+1}\displaystyle\int_\Omega a(x)|u|^{1-\gamma}\mathrm{d}x = \lambda \norma[u]{r}{L^r(\Omega)}.
\end{equation*}

This implies, multiplying by $\gamma t_1^{r-2}$ and $-(r-1)t_1^{r-2}$, respectively, that
\begin{equation}\label{eq315}
 \gamma t_1^{p-2}\norma[\derivada u]{p}{L^p(\Omega)} + \gamma t_1^{q-2}\norma[\derivada u]{q}{L^q(\Omega)} - \gamma\lambda t_1^{r-2}\lambda \norma[u]{r}{L^r(\Omega)} \\= \gamma t_1^{-\gamma -1}\displaystyle\int_\Omega a(x)|u|^{1-\gamma}\mathrm{d}x
\end{equation}
and
\begin{equation}\label{eq316}
\begin{split}
    (r-1)t_1^{-\gamma-1}\displaystyle\int_\Omega a(x)|u|^{1-\gamma}\mathrm{d}x -(r-1)t_1^{p-2}\norma[\derivada u]{p}{L^p(\Omega)} - (r-1)t_1^{q-2}\norma[\derivada u]{q}{L^q(\Omega)} \\= -\lambda(r-1)t_1^{r-2}\norma[u]{r}{L^r(\Omega)}. 
\end{split}
\end{equation}

Replacing the Eq.(\ref{eq315}) in Eq.(\ref{eq314}), yields
\begin{equation}\label{eq317}
    \begin{split}
        w''_u(t_1)= & (p+\gamma -1)t_1^{p-2}\norma[\derivada u]{p}{L^p(\Omega)} + (q+\gamma-1)t_1^{q-2}\norma[\derivada u]{q}{L^q(\Omega)}- \lambda (r+\gamma-1)t_1^{r-2}\norma[u]{r}{L^r(\Omega)}\\
         = & t_1^{-2}\left( (p+\gamma-1)t_1^p\norma[\derivada u]{p}{L^p(\Omega)} + (q+\gamma-1)t_1^q \norma[\derivada u]{q}{L^q(\Omega)}\right.\left.- \lambda(r+\gamma-1)t_1^r\norma[u]{r}{L^r(\Omega)}\right).
    \end{split}
\end{equation}

On the other hand, applying the Eq.(\ref{eq316}) in Eq.(\ref{eq314}) and using the representation in Eq.(\ref{eq313}) it follows that
\textcolor{blue}
{\begin{equation}\label{eq318}
\begin{split}
     w''_u(t_1) =& (p-r)t_1^{p-2}\norma[\derivada u]{p}{L^p(\Omega)} + (q-r)t_1^{q-2}\norma[\derivada u]{q}{L^q(\Omega)} \\&+ (r+\gamma-1)t_1^{-\gamma-1}\displaystyle\int_\Omega a(x)|u|^{1-\gamma}\mathrm{d}x\\
     = & t_1^{1-r}\phi'_u(t_1) >0.
\end{split}
\end{equation}}

\textcolor{red}{ From Eq.(\ref{eq317}) and Eq.(\ref{eq318}), we obtain}
\begin{equation*}
    (p+\gamma-1)t_1^p\norma[\derivada u]{p}{L^p(\Omega)} + (q+\gamma-1)t_1^q\norma[\derivada u]{q}{L^q(\Omega)} - \lambda (r+\gamma-1)t_1^r\norma[u]{r}{L^r(\Omega)}>0,
\end{equation*}
which implies 
\begin{equation*}
    t_1u\in\mathbf{N}_\lambda^+ \, \mbox{ for all } \lambda \in (0, \hat{\lambda}^*].
\end{equation*}
Hence, $\mathbf{N}_\lambda^+\neq \emptyset$.
 
 Similarly, it is shown that $\mathbf{N}_\lambda^-\neq \emptyset$ for $t_2$.
 
 Let $\{u_n\}_{n\in\N}\subset \mathbf{N}_\lambda^+$ be a minimizing sequence; i.e.,
 \begin{equation}\label{eq319}
     \mathbf{E}_\lambda(u_n)\searrow m_\lambda^+<0 \, \mbox{ as } n\rightarrow \infty.
 \end{equation}
Recall that $\mathbf{N}_\lambda^+\subset \mathbf{N}_\lambda$ and so we conclude from  Proposition \ref{p31} that $\{u_n\}_{n\in\N}\subset \hzero(\Omega)$ is bounded. Therefore, we may assume that
\begin{equation}\label{eq320}
    u_n\rightarrow u^* \, \mbox{ in } \hzero(\Omega) \, \mbox{ and } u_n\rightarrow u^* \, \mbox{ in } L^{r}(\Omega).
\end{equation}

\textcolor{blue}{From Eq.(\ref{eq319}) and Eq.(\ref{eq320}), we know that
\begin{equation*}
    \mathbf{E}_\lambda(u^*) \leq \liminf_{n\rightarrow +\infty}\mathbf{E}_\lambda(u_n)<0={\bf E}_{\lambda}(0).
\end{equation*}}
Consequently, $u^*\neq 0$.

\textbf{\underline{Claim}:} $\displaystyle\liminf_{n\rightarrow +\infty}\rho^{\mathcal{H}}(u_n)= \rho^{\mathcal{H}}(u^*)$.

Suppose, by contradiction, that
\begin{equation*}
  \liminf_{n\rightarrow +\infty}\rho^{\mathcal{H}}(u_n)> \rho^{\mathcal{H}}(u^*).
\end{equation*}
From Eq.(\ref{eq312}), yields
\begin{equation*}
    \begin{split}
        \liminf_{n\rightarrow +\infty}w'_{u_n}(t_1) = &  \liminf_{n\rightarrow +\infty}\left( t_1^{p-1}\norma[\derivada u]{p}{L^p(\Omega)} + t_1^{q-1}\norma[\derivada u]{q}{L^q(\Omega)} \right.\\&\left.- t_1^{-\gamma}\displaystyle\int_\Omega a(x)|u|^{1-\gamma}\mathrm{d}x - \lambda t_1^{r-1}\norma[u]{r}{L^r(\Omega)}\right).\\
        >& t_1^{p-1}\norma[\derivada u^*]{p}{L^{p}(\Omega)} + t_1^{q-1}\norma[\derivada u^*]{q}{L_{\mu}^q} - t_1^{-\gamma}\displaystyle\int_\Omega a(x)|u^*|^{1-\gamma}\mathrm{d}x - \lambda t_1^{r-1}\norma[u^*]{r}{L^r(\Omega)}\\
        = & w'_{u^*}(t_1)\\
        = & t_1^{r-1}\left(\Phi_{u^*}(t_1)-\lambda \norma[u^*]{r}{L^{r}(\Omega)}\right)\\
        = &0,
    \end{split}
\end{equation*}
which implies the existence of $n\in\N$ such that $w'_{u_n}(t_1)>0$, for all $n>n_0$. Recall that $u_n\in \mathbf{N}_\lambda^+\subset \mathbf{N}_\lambda$ and $w'_{u_n}(t) = t^{r-1}(\Phi_{u_n}(t)-\lambda \norma[u_n]{r}{L^{r}(\Omega)})$. Thus, we have $w'_{u_n}(t)<0$, for all $t\in (0,1)$ and $w'_{u_n}(1)=0$. Therefore, $t_1>1$.

Since $w_{u^*}$ is decreasing on $(0,t_1]$, one has
\begin{equation*}
    \mathbf{E}_\lambda(t_1u^*)\leq  \mathbf{E}_\lambda(u^*) < m_\lambda^+.
\end{equation*}

Remember that $t_1u^*\in \mathbf{N}_\lambda^+$. So, we obtain that
\begin{equation*}
    m_\lambda^+\leq \mathbf{E}_\lambda(t_1u^*) < m_\lambda^+,
\end{equation*}
which is a contradiction, and the claim is proved.

Since $\displaystyle\liminf_{n\rightarrow +\infty}\rho^{\mathcal{H}}(u_n)= \rho^{\mathcal{H}}(u^*)$, we can find a subsequence, still denoted by $u_n$, such that $\rho^\mathcal{H}(u_n)\rightarrow \rho^\mathcal{H}(u^*)$. It follows from (v) of Proposition \ref{Proposition2.2}  that $u_n\rightarrow u$ in $\hzero(\Omega)$. This implies $\mathbf{E}_\lambda(u_n)\rightarrow \mathbf{E}_\lambda(u^*)$ and, consequently, $\mathbf{E}_\lambda(u^*)=m_\lambda^+$. Since $u_n\in \mathbf{N}_\lambda^+$ for all $n\in\N$, it follows that
\begin{equation*}
    (p+\gamma-1)\norma[\derivada u_n]{p}{L^{p}} + (q+\gamma -1)\norma[\derivada u_n]{q}{L_{\mu, \psi}^{q}} - \lambda(r+\gamma-1)\norma[u_n]{r}{L^r(\Omega)}>0.
\end{equation*}

\textcolor{blue}{Taking $n\rightarrow +\infty$, we obtain
\begin{equation}\label{eq321}
 (p+\gamma-1)\norma[\derivada u^*]{p}{L^{p}} + (q+\gamma -1)\norma[\derivada u^*]{q}{L_{\mu, \psi}^{q}} - \lambda(r+\gamma-1)\norma[u^*]{r}{L^r(\Omega)}\geq 0.
\end{equation}}

Recall that $\lambda\in (0, \hat{\lambda}^*)$ and $\hat{\lambda}^*\leq \lambda^*$. Then, from Proposition \ref{p33}, we know that equality in Eq.(\ref{eq321}) can not occur. Therefore, we conclude that $u^*\in \mathbf{N}_\lambda^+ $. Since we can use $|u^*|$ instead of $u^*$, we may assume that $u^*(x)\geq 0$ for a.a $x\in \Omega$ with $u^*\neq 0$. The proof is finished.

In what follows, for $\epsilon>0$, we denote
\begin{equation*}
    B_\epsilon(0)=\left\{u\in\hzero(\Omega): \norma[u]{}{\hzero(\Omega)}<\epsilon\right\}.
\end{equation*}
\begin{lemma}\label{l35}
Suppose that {\rm(\bf H)} is satisfied and let $u\in \mathbf{N}_\lambda^{\pm}$. Then there exist $\epsilon>0$ and a continuous $v: B_\epsilon(0)\rightarrow (0,+\infty) $ such that
\begin{equation*}
    v(0)=1 \,\mbox{ and }\, v(y)(u+y)\in \mathbf{N}_\lambda^{\pm},\, \mbox{ for all } y\in B_\epsilon(0).
\end{equation*}
\end{lemma}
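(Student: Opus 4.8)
The plan is to realize $\mathbf{N}_\lambda^{\pm}$ locally as the zero set of a single scalar equation and to invoke an implicit function argument in the rescaling parameter \emph{alone}, thereby sidestepping the fact, noted at the start of this section, that $\mathbf{E}_\lambda$ fails to be $C^1$ in $u$. Concretely, for $y$ in a neighborhood of $0$ in $\hzero(\Omega)$ and $t>0$ I would introduce
\[
G(t,y) = t^{p}\norma[\derivada(u+y)]{p}{L^{p}(\Omega)} + t^{q}\norma[\derivada(u+y)]{q}{L_{\mu}^{q}(\Omega)} - t^{1-\gamma}\into a(x)|u+y|^{1-\gamma}\mathrm{d}x - \lambda t^{r}\norma[u+y]{r}{L^{r}(\Omega)},
\]
which is exactly $t\,w_{u+y}'(t)$. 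Since $t>0$, the equation $G(t,y)=0$ is equivalent to $w_{u+y}'(t)=0$, i.e.\ to $t(u+y)\in\mathbf{N}_\lambda$. Because $u\in\mathbf{N}_\lambda$ we have $G(1,0)=w_u'(1)=0$, so the function $v$ we seek must satisfy $v(0)=1$.

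The decisive computation is the partial derivative in the scalar variable. Differentiating the relation $G=t\,w'$ gives $\partial_t G(t,y)=w_{u+y}'(t)+t\,w_{u+y}''(t)$, hence $\partial_t G(1,0)=w_u'(1)+w_u''(1)=w_u''(1)$, the last equality using $u\in\mathbf{N}_\lambda$. Eliminating the singular integral through the Nehari condition yields, for $u\in\mathbf{N}_\lambda$, the identity
\[
w_u''(1) = (p+\gamma-1)\norma[\derivada u]{p}{L^{p}(\Omega)} + (q+\gamma-1)\norma[\derivada u]{q}{L_{\mu}^{q}(\Omega)} - \lambda(r+\gamma-1)\norma[u]{r}{L^{r}(\Omega)},
\]
which is precisely the quantity defining $\mathbf{N}_\lambda^{\pm}$. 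By hypothesis it is strictly positive on $\mathbf{N}_\lambda^{+}$ and strictly negative on $\mathbf{N}_\lambda^{-}$, so in either case $\partial_t G(1,0)=w_u''(1)\neq 0$.

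Next I would apply an implicit function argument with respect to $t$ only. The maps $G$ and $\partial_t G$ are jointly continuous on $(0,\infty)\times\hzero(\Omega)$: the powers of $t$ are smooth for $t$ bounded away from $0$, the norms depend continuously on $y$, and the singular term $y\mapsto\into a(x)|u+y|^{1-\gamma}\mathrm{d}x$ is continuous by the embeddings of Proposition \ref{Proposition2.1} together with $1-\gamma<1$. Since $\partial_t G(1,0)\neq 0$ and $\partial_t G$ is continuous, $t\mapsto G(t,y)$ is strictly monotone on some interval $(1-\delta,1+\delta)$ for all $y$ in a small ball; as $G(1,0)=0$, the intermediate value theorem and this strict monotonicity produce, for each such $y$, a \emph{unique} root $t=v(y)$ near $1$, with $v$ continuous and $v(0)=1$. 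This gives $v(y)(u+y)\in\mathbf{N}_\lambda$ on a ball $B_{\epsilon}(0)$.

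Finally, to upgrade membership from $\mathbf{N}_\lambda$ to $\mathbf{N}_\lambda^{\pm}$, I would note that the sign-defining functional evaluated along $y\mapsto v(y)(u+y)$ is continuous in $y$ (as $v$ is continuous and the norms depend continuously on $y$), and at $y=0$ it equals $w_u''(1)$, whose sign is strict by assumption. Shrinking $\epsilon$ if necessary preserves this strict sign, so $v(y)(u+y)$ remains in $\mathbf{N}_\lambda^{\pm}$ for all $y\in B_{\epsilon}(0)$. The main obstacle is exactly the non-smoothness of the singular term in the $u$-variable: this is why the implicit function theorem must be run in the scalar variable $t$, where $t^{1-\gamma}$ is smooth, rather than jointly in $(t,y)$; the elementary monotonicity-plus-intermediate-value formulation above is precisely what makes the weaker regularity suffice.
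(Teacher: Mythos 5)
Your proposal is correct, and its core mechanism is the same as the paper's: both run an implicit-function argument at $(t,y)=(1,0)$ on a positive multiple of the fibering derivative $w'_{u+y}(t)$, the decisive point being that the $t$-partial derivative there equals (via the Nehari identity) the quantity $(p+\gamma-1)\lVert\derivada u\rVert_{L^p(\Omega)}^p+(q+\gamma-1)\lVert\derivada u\rVert_{L^q_{\mu}(\Omega)}^q-\lambda(r+\gamma-1)\lVert u\rVert_{L^r(\Omega)}^r$, whose strict sign is exactly what defines $\mathbf{N}_\lambda^{\pm}$; the paper normalizes by $t^{\gamma}$ (its $\overline{\delta}(y,t)$ is $t^{\gamma}w'_{u+y}(t)$) while you normalize by $t$, which is immaterial. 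Where you genuinely depart from the paper is in the justification of the implicit-function step, and this is a substantive improvement rather than a stylistic one. The paper invokes ``the Implicit Function Theorem'' as a black box, but its function $\overline{\delta}$ is not $C^1$ in the $y$-variable: the singular term $\int_\Omega a(x)|u+y|^{1-\gamma}\,\mathrm{d}x$ is merely continuous in $y$ (this is the very non-smoothness that forces the Nehari approach), so the classical Banach-space IFT does not literally apply. Your replacement --- joint continuity of $\partial_t G$, uniform strict monotonicity of $t\mapsto G(t,y)$ on an interval around $t=1$, persistence of the endpoint signs for small $y$, then the intermediate value theorem plus uniqueness to define $v(y)$ and obtain its continuity --- is a parametric implicit-function argument that requires only continuity in $y$, i.e.\ precisely the regularity actually available. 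You also carry out explicitly a step the paper omits: the implicit-function construction only yields $v(y)(u+y)\in\mathbf{N}_\lambda$, and membership in $\mathbf{N}_\lambda^{\pm}$ needs the additional observation that the sign-defining functional is continuous along $y\mapsto v(y)(u+y)$ and has strict sign at $y=0$, so it persists after shrinking $\epsilon$. In short: same strategy, but your execution is self-contained and closes two small gaps in the paper's proof.
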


\begin{proof} We show the proof only for $\mathbf{N}_\lambda^{+}$, since the proof for $\mathbf{N}_\lambda^{-}$ works in a similar way. To this end, let $\overline{\delta}:\hzero(\Omega)\times (0,+\infty)\rightarrow\mathbb{R}$ be defined by
\begin{align*}
    \overline{\delta}(y,t) =& t^{p+\gamma-1} \norma[\derivada(u+y)]{p}{L^{p}} + t^{q+\gamma-1} \norma[\derivada(u+y)]{q}{L_{\mu}^{q}} - \displaystyle\int_\Omega a(x)|u+y|^{1-\gamma}\mathrm{d}x\\
    & - \lambda t_1^{r+\gamma-1}\norma[u+y]{r}{L^r(\Omega)},
\end{align*}
for all $y\in\hzero(\Omega)$.

Since $u\in\mathbf{N}_\lambda^{+}\subset\mathbf{N}_\lambda$, one has $\overline{\delta}(0,1)=0$. Because of $u\in \mathbf{N}_\lambda^{+}$, it follows that
\begin{equation*}
    \overline{\delta}'_t(0,1) = (p+\gamma-1)\norma[\derivada u]{p}{L^{p}(\Omega)} + (q+\gamma-1)\norma[\derivada u]{q}{L_{\mu}^{q}(\Omega)} - \lambda(r+\gamma-1)\norma[u]{r}{L^r(\Omega)}>0.
\end{equation*}

Then, by the  Implicit Function Theorem, there exist $\epsilon>0$ and a continuous function $\overline{\delta}:B_\epsilon(0)\rightarrow (0,+\infty)$ such that
\begin{equation*}
    \overline{\delta}(0)=1 \, \mbox{ and } \overline{\delta}(y)(u+y)\in \mathbf{N}_\lambda^{+}, \mbox{ for all } y \in B_\epsilon(0).
\end{equation*}
\end{proof}

\begin{proposition}\label{p36} Suppose that  {\rm(\bf H)} is satisfied and consider $h\in \hzero(\Omega)$ and $\lambda \in(0, \hat{\lambda}]$. Then there exists $b>0$ such that $\mathbf{E}_\lambda(u^*)\leq \mathbf{E}_\lambda(u^*+th)$, for all $t\in[0,b]$.
\end{proposition}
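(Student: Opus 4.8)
The plan is to combine the fact that $u^*$ minimises $\mathbf{E}_\lambda$ over $\mathbf{N}_\lambda^+$ with the local projection onto $\mathbf{N}_\lambda^+$ furnished by Lemma \ref{l35}. Fix $h\in\hzero(\Omega)$. Since $u^*\in\mathbf{N}_\lambda^+$, Lemma \ref{l35} yields $\epsilon>0$ and a continuous map $v\colon B_\epsilon(0)\to(0,+\infty)$ with $v(0)=1$ and $v(y)(u^*+y)\in\mathbf{N}_\lambda^+$ for every $y\in B_\epsilon(0)$. For $t\geq 0$ small enough that $th\in B_\epsilon(0)$, write $\rho(t):=v(th)$; then $\rho$ is continuous, $\rho(0)=1$, and $\rho(t)(u^*+th)\in\mathbf{N}_\lambda^+$. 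As $\mathbf{E}_\lambda(u^*)=m_\lambda^+=\inf_{\mathbf{N}_\lambda^+}\mathbf{E}_\lambda$, this immediately gives
\begin{equation*}
\mathbf{E}_\lambda(u^*)\leq\mathbf{E}_\lambda\big(\rho(t)(u^*+th)\big).
\end{equation*}

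The remaining task is to show that, for $t$ small, $\mathbf{E}_\lambda(\rho(t)(u^*+th))\leq\mathbf{E}_\lambda(u^*+th)$; together with the previous inequality this yields the claim. For this I would study the fibering function $s\mapsto w_{u^*+th}(s)=\mathbf{E}_\lambda\big(s(u^*+th)\big)$. Exactly as in the proof of Proposition \ref{p34}, using $1-\gamma<1<p<q<r$ one checks that $w_{u^*+th}'(s)\to-\infty$ as $s\to 0^+$ (from the singular term) and $w_{u^*+th}(s)\to-\infty$ as $s\to+\infty$ (from the term $-\tfrac{\lambda}{r}s^r\norma[u^*+th]{r}{L^r(\Omega)}$). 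Writing $w_{u^*+th}'(s)=s^{r-1}\big(\Phi_{u^*+th}(s)-\lambda\norma[u^*+th]{r}{L^r(\Omega)}\big)$, the function $\Phi_{u^*+th}$ rises from $-\infty$ to a single maximum and then decays to $0$, so $w_{u^*+th}$ has exactly two critical points: a local minimum at the $\mathbf{N}_\lambda^+$-point $s=\rho(t)$ and a local maximum at an $\mathbf{N}_\lambda^-$-point $s=\tau(t)>\rho(t)$, being decreasing on $(0,\rho(t))$ and increasing on $(\rho(t),\tau(t))$. In particular $\rho(t)$ is the global minimiser of $w_{u^*+th}$ over $(0,\tau(t))$.

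For $u^*$ itself, membership $u^*\in\mathbf{N}_\lambda^+$ means $w_{u^*}'(1)=0$ and $w_{u^*}''(1)>0$, so its $\mathbf{N}_\lambda^+$-point is $\rho(0)=1$ and its $\mathbf{N}_\lambda^-$-point satisfies $\tau(0)>1$. By the continuity of $v$ and the continuous dependence of the coefficients of $w_{u^*+th}$ on $t$, we have $\rho(t)\to 1$ and $\tau(t)\to\tau(0)>1$ as $t\to 0^+$, hence there is $b>0$ with $1<\tau(t)$ for all $t\in[0,b]$. Then $1\in(0,\tau(t))$, and since $\rho(t)$ minimises $w_{u^*+th}$ on that interval,
\begin{equation*}
\mathbf{E}_\lambda\big(\rho(t)(u^*+th)\big)=w_{u^*+th}(\rho(t))\leq w_{u^*+th}(1)=\mathbf{E}_\lambda(u^*+th),
\end{equation*}
which combined with the first inequality proves $\mathbf{E}_\lambda(u^*)\leq\mathbf{E}_\lambda(u^*+th)$ for all $t\in[0,b]$.

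The main obstacle is this middle step: rigorously pinning down the shape of $w_{u^*+th}$ and the continuous dependence of its critical points $\rho(t)$ and $\tau(t)$ on $t$, so that $s=1$ stays trapped in the valley $(0,\tau(t))$ on which $\rho(t)$ is the minimiser. This is precisely where the exponent ordering $1-\gamma<1<p<q<r$ and the singular term are used, and one must also note that $u^*+th\neq 0$ for $t$ small (since $u^*\neq 0$) so that the fibering analysis is legitimate.
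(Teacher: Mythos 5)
Your proposal is correct and its skeleton coincides with the paper's: project $u^*+th$ back onto $\mathbf{N}_\lambda^+$ via Lemma \ref{l35}, use $\mathbf{E}_\lambda(u^*)=m_\lambda^+=\inf_{\mathbf{N}_\lambda^+}\mathbf{E}_\lambda$ from Proposition \ref{p34} to get $\mathbf{E}_\lambda(u^*)\leq \mathbf{E}_\lambda\big(v(th)(u^*+th)\big)$, and then compare this value with $w_{u^*+th}(1)=\mathbf{E}_\lambda(u^*+th)$. The only real difference is how that last comparison is justified. The paper introduces $\eta_h(t)$, which is precisely $w''_{u^*+th}(1)$, shows $\eta_h(0)>0$ from $u^*\in\mathbf{N}_\lambda^+$ (via Eq.(\ref{eq323}) and Eq.(\ref{eq324})), and then combines continuity of $\eta_h$ with $\overline{\delta}(t)\to 1$: for small $t$ the fibering map is convex near $s=1$, so its critical point $\overline{\delta}(t)$ there is a local minimum and $w_{u^*+th}(\overline{\delta}(t))\leq w_{u^*+th}(1)$. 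You instead invoke the global valley structure of $w_{u^*+th}$ (unique local minimum $\rho(t)$, unique local maximum $\tau(t)$, with $w_{u^*+th}$ decreasing then increasing on $(0,\tau(t))$), which is legitimate because the choice of $\hat{\lambda}^*$ in Proposition \ref{p34} guarantees $\Phi_u(t_0)>\lambda\lVert u\rVert_{L^r(\Omega)}^r$ for every nonzero $u$, hence exactly two fibering critical points on each ray. Moreover, the obstacle you flag --- continuity of $\tau(t)$ --- can be bypassed entirely: fix any $s^\dagger\in(1,\tau(0))$, so that $w'_{u^*}(s^\dagger)>0$; since the coefficients of $w_{u^*+th}$ depend continuously on $t$, we get $w'_{u^*+th}(s^\dagger)>0$ for all small $t$, and as $w'_{u^*+th}>0$ only on $(\rho(t),\tau(t))$, this forces $\tau(t)>s^\dagger>1$, which is all that is needed. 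The trade-off: the paper's local-convexity argument is shorter but stated tersely (strictly speaking it needs $w''_{u^*+th}>0$ on a whole interval joining $\overline{\delta}(t)$ to $1$, uniformly for small $t$, not merely at $s=1$), whereas your global argument is longer but makes fully explicit why the projected point has the smaller energy.
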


\begin{proof} We introduce the function $\eta_h:[0,+\infty)\rightarrow\mathbb{R}$, defined by
\begin{equation}\label{eq322}
\begin{split}
  \eta_h(t) = (p-1)\norma[\derivada u^*+t\derivada h]{p}{L^{p}(\Omega)} + (q-1)\norma[\derivada u^*+t\derivada h]{q}{L_{\mu}^{q}(\Omega)} \\
  + \gamma  \displaystyle\int_\Omega a(x)|u^*+th|^{1-\gamma}\mathrm{d}x - \lambda(r-1)\norma[u^*+th]{r}{L^r(\Omega)}.
\end{split}
\end{equation}

As we can see in  Proposition \ref{p34}, $u^*\in \mathbf{N}_\lambda^{+}\subseteq	 \mathbf{N}_\lambda$. This implies that
\begin{equation}\label{eq323}
\begin{split}
    \gamma \displaystyle\int_\Omega a(x)|u^*|^{1-\gamma}\mathrm{d}x =  \gamma \norma[\derivada u^*]{p}{L^{p}(\Omega)} +\gamma \norma[\derivada u^*]{q}{L_{\mu}^{q}(\Omega)} -\lambda\gamma\norma[u^*]{r}{L^r(\Omega)} 
\end{split}
\end{equation}
and
\begin{equation}\label{eq324}
\begin{split}
    0<(p+\gamma-1)\norma[\derivada u^*]{p}{L^{p}(\Omega)} + (q+\gamma-1)\norma[\derivada u^*]{q}{L_{\mu}^{q}(\Omega)}- \lambda(r+\gamma-1)\norma[u^*]{r}{L^r(\Omega)}. 
    \end{split}
\end{equation}

Combining Eq.(\ref{eq322}), Eq.(\ref{eq323}) and Eq.(\ref{eq324}), we conclude that $\eta_h(0)>0$. Since $\eta_h:[0,+\infty)\rightarrow\mathbb{R}$ is continuous, we can find $b_0>0$ such that
\begin{equation*}
    \eta_h(t)>0 \mbox{ for all } t \in [0,b_0].
 \end{equation*}

Using Lemma \ref{l35}, this implies that for every $t\in[0,b_0]$, we can find $\overline{\delta}(t)>0$ such that
 \begin{equation}\label{eq325}
     \overline{\delta}(t)(u^*+th)\in  \mathbf{N}_\lambda^{+} \mbox{ and } \overline{\delta}(t)\rightarrow 1 \mbox{ as } t\rightarrow 0^+.
 \end{equation}

\textcolor{blue}{From Proposition \ref{p34}, we know that
 \begin{equation*}
     m_{\lambda}^+ = {\bf E}_\lambda(u^*)\leq {\bf E}_\lambda(\overline{\delta}(u^*+th)) \mbox{ for all } t\in[0,b_0].
 \end{equation*}}

 From $w''_{{u}^{*}}(1)>0$ and the continuity in $t$, we get $w''_{{u}^{*}+th}(1)>0$ for $t\in [0,b]$, with $b\in (0,b_{0}]$. Therefore, from this and Eq.(\ref{eq325}), we can find $b\in(0,b_0]$ small enough such that
 \begin{equation*}
     m_{\lambda}^+ = {\bf E}_\lambda(u^*)\leq {\bf E}_\lambda(u^*+th) \mbox{ for all } t\in[0,b).
 \end{equation*}
\end{proof}

\begin{proposition}\label{p37} Suppose that {\rm(\bf H)} is satisfied and let $\lambda\in(0,\hat{\lambda}^*]$. Then $u^*$ is a weak solution of problem {\rm Eq.(\ref{eq1})} such that $\mathbf{E}_\lambda(u^*)<0$.
\end{proposition}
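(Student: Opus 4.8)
The plan is to upgrade the local minimality of $u^*$ recorded in Proposition \ref{p36} into the weak formulation, using a difference‑quotient argument for the smooth part of $\mathbf{E}_\lambda$ and monotone convergence for the singular part. Note that the energy bound $\mathbf{E}_\lambda(u^*)=m_\lambda^+<0$ is already supplied by Proposition \ref{p34}, so only the weak‑solution identity has to be produced.

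First I would fix $h\in\hzero(\Omega)$ with $h\geq 0$. By Proposition \ref{p36} the map $t\mapsto\mathbf{E}_\lambda(u^*+th)$ has a minimum at $t=0$ on $[0,b]$, so its right difference quotient is nonnegative. The $p$‑, $q$‑ and $L^r$‑terms are differentiable, contributing exactly $\langle Au^*,h\rangle-\lambda\into (u^*)^{r-1}h\,\mathrm{d}x$ in the limit $t\to 0^+$. For the singular term I would use that $s\mapsto s^{1-\gamma}$ is concave, so for $h\geq 0$ the quotient $t^{-1}\bigl[(u^*+th)^{1-\gamma}-(u^*)^{1-\gamma}\bigr]$ is nonnegative and increases as $t\downarrow 0$; the Monotone Convergence Theorem then gives
\[
\lim_{t\to 0^+}\frac{1}{(1-\gamma)t}\into a(x)\bigl[(u^*+th)^{1-\gamma}-(u^*)^{1-\gamma}\bigr]\mathrm{d}x=\into a(x)(u^*)^{-\gamma}h\,\mathrm{d}x\in[0,+\infty].
\]
Nonnegativity of the difference quotient then forces this limit to be finite and yields, for every $h\geq 0$,
\[
\langle Au^*,h\rangle\ \geq\ \into a(x)(u^*)^{-\gamma}h\,\mathrm{d}x+\lambda\into (u^*)^{r-1}h\,\mathrm{d}x,
\]
and in particular $a(x)(u^*)^{-\gamma}h\in L^1(\Omega)$.

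Next I would saturate this inequality at $h=u^*$: since $u^*\in\mathbf{N}_\lambda^+\subset\mathbf{N}_\lambda$ and $u^*\geq 0$, the defining identity of the Nehari manifold reads $\langle Au^*,u^*\rangle=\into a(x)(u^*)^{1-\gamma}\mathrm{d}x+\lambda\into (u^*)^{r}\mathrm{d}x$, so equality holds for $h=u^*$. To pass to an arbitrary $\varphi\in\hzero(\Omega)$, I would insert the nonnegative test function $h=(u^*+\eps\varphi)^+$ into the inequality, split the integrals over $\{u^*+\eps\varphi\geq 0\}$ and its complement, subtract the saturated identity for $u^*$, divide by $\eps>0$, and let $\eps\to 0^+$; controlling the contribution from $\{u^*+\eps\varphi<0\}$ (which collapses onto a subset of $\{u^*=0\}$) and invoking $a(x)(u^*)^{-\gamma}u^*\in L^1(\Omega)$ produces
\[
\langle Au^*,\varphi\rangle\ \geq\ \into a(x)(u^*)^{-\gamma}\varphi\,\mathrm{d}x+\lambda\into (u^*)^{r-1}\varphi\,\mathrm{d}x .
\]
Replacing $\varphi$ by $-\varphi$ gives the reverse inequality, hence equality, which is precisely the weak formulation; a comparison/maximum‑principle argument for ${\bf L}^{\alpha,\beta}_{\psi}$ together with $a>0$ a.e.\ then upgrades $u^*\geq 0$, $u^*\neq 0$ to $u^*>0$ a.e.\ in $\Omega$.

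The main obstacle throughout is the singular term, whose non‑differentiability is exactly what makes $\mathbf{E}_\lambda$ fail to be $C^1$. The argument depends on two delicate points: replacing the absent derivative by the monotone‑convergence limit in the first step, and rigorously justifying the $\eps\to 0^+$ passage in the extension step, where one must show that the integrals over $\{u^*+\eps\varphi<0\}$ vanish in the limit. It is precisely here that the sign $u^*\geq 0$ and the integrability $a(x)(u^*)^{-\gamma}u^*\in L^1(\Omega)$ — both obtained in the first step — are indispensable.
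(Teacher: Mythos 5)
Your proposal is correct and takes essentially the same route as the paper's proof: a one-sided difference-quotient argument from Proposition \ref{p36} yielding the variational inequality for nonnegative test functions (with integrability of $a(x)(u^*)^{-\gamma}\overline{h}$), then testing with $(u^*+\eps h)^+$, invoking the Nehari identity for $u^*\in\mathbf{N}_\lambda$, controlling the contribution of $\{u^*+\eps h<0\}$, and concluding equality by the arbitrariness of the test function, with $\mathbf{E}_\lambda(u^*)<0$ coming from Propositions \ref{p32} and \ref{p34}. The only cosmetic differences are that you justify the singular limit via concavity plus monotone convergence where the paper uses a mean-value representation plus Fatou's lemma, and you add a closing maximum-principle remark for strict positivity that the paper omits.
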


\begin{proof} Let $0\leq \overline{h}\in\hzero(\Omega)$. Through the Proposition \ref{p36}, we know that 
\begin{equation*}
    0\leq \mathbf{E}_\lambda(u^*+t\overline{h})-\mathbf{E}_\lambda(u^*)\, \mbox{ for all }0\leq t\leq b,
\end{equation*}
that is,
\begin{eqnarray}\label{eq326}
\frac{1}{1-\gamma}\displaystyle\int_\Omega a(x)\left[|u^*+t\overline{h}|^{1-\gamma}-|u^*|^{1-\gamma}\right]\mathrm{d}x \notag &&\leq  \displaystyle\int_\Omega \frac{1}{p}\left[|\derivada u^* +t\derivada \overline{h}|^p - |\derivada u^*|^p\right]\mathrm{d}x\notag\\
+&& \displaystyle\int_\Omega \frac{\mu(x)}{q}\left[|\derivada u^*+t\derivada \overline{h}|^q - |\derivada u^*|^q\right]\mathrm{d}x\notag\\
-&& \dfrac{\lambda}{r}\left[\norma[u^*+t\overline{h}]{r}{L^r(\Omega)} - \norma[u^*]{r}{L^r(\Omega)}\right]
\end{eqnarray}

Multiplying (\ref{eq326}) by $\dfrac{1}{t}$ and taking the limit $t\rightarrow 0^+$, we have
\begin{equation}\label{eq327}
\begin{split}
    \frac{1}{1-\gamma}\lim_{t\rightarrow 0^+}\displaystyle\int_\Omega\left(\frac{ a(x)\left[|u^*+t\overline{h}|^{1-\gamma}-|u^*|^{1-\gamma}\right]}{t}\right)\mathrm{d}x\leq \displaystyle\int_\Omega\left(|\derivada u^*|^{p-2}\derivada u^*\right)\derivada\overline{h}\,\mathrm{d}x\\+\displaystyle\int_\Omega\left( \mu(x)|\derivada u^*|^{q-2}\derivada u^*  \right)\derivada\overline{h}\,\mathrm{d}x\\-\lambda \displaystyle\int_\Omega (u^*)^{r-1}\overline{h}\,\mathrm{d}x.
\end{split}
\end{equation}
Note that
\begin{equation*}
     \frac{1}{1-\gamma}\displaystyle\int_\Omega\frac{ a(x)\left[|u^*+t\overline{h}|^{1-\gamma}-|u^*|^{1-\gamma}\right]}{t}\mathrm{d}x=\displaystyle\int_\Omega a(x)(u^*+\epsilon t\overline{h})^{-\gamma}\overline{h} \mathrm{d}x,
\end{equation*}
where $\epsilon\rightarrow 0^+$ as $t\rightarrow 0^+$ and $a(x)(u^*+\epsilon t\overline{h})^{-\gamma}\overline{h}\rightarrow a(x) (u^*)^{-\gamma}\overline{h}$ a.e. in $\Omega$, as $t\rightarrow 0^+$.

We point out that $a(x)(u^*+\epsilon t\overline{h})^{-\gamma}\overline{h}\geq 0$ in $\Omega$. It follows from Fatou's Lemma that $a(x) (u^*)^{-\gamma}\overline{h}$ is integrable and
\begin{equation}\label{eq328}
    \displaystyle\int_\Omega a(x) (u^*)^{-\gamma}\overline{h} \mathrm{d}x\leq  \frac{1}{1-\gamma}\lim_{t\rightarrow 0^+}\displaystyle\int_\Omega\left(\frac{ a(x)\left[|u^*+t\overline{h}|^{1-\gamma}-|u^*|^{1-\gamma}\right]}{t}\right)\mathrm{d}x.
\end{equation}

Putting Eq.(\ref{eq327}) and Eq.(\ref{eq328}) together, we can find that 
\begin{equation}\label{eq329}
\begin{split}
    0\leq& \displaystyle\int_\Omega\left(|\derivada u^*|^{p-2}\derivada u^*+ \mu(x)|\derivada u^*|^{q-2}\derivada u^*  \right)\derivada\overline{h}\,\mathrm{d}x\\
    &-\displaystyle\int_\Omega  \left(a(x) (u^*)^{-\gamma}+\lambda(u^*)^{r-1} \right)\overline{h}\,\mathrm{d}x,
\end{split}
\end{equation}
for all $\overline{h}\in\hzero(\Omega)$ with $\overline{h}\geq 0$.

Now we are ready to prove that $u^*$ is a weak solution of Eq.(\ref{eq1}). To this end, suppose that $h\in \hzero(\Omega)$ and $\epsilon>0$, and define $\Theta\in \hzero(\Omega)$, $\Theta\geq 0$ by
\begin{equation*}
    \Theta = (u^*+\epsilon h)^+ = \max (u^*+\epsilon h,0).
\end{equation*}

\textcolor{blue}{Replacing $\overline{h}$ by $\Theta$ in Eq.(\ref{eq329}), we obtain
\begin{equation*}
\begin{split}
    0\leq & \displaystyle\int_\Omega\left(|\derivada u^*|^{p-2}\derivada u^*+ \mu(x)|\derivada u^*|^{q-2}\derivada u^*  \right)\derivada\Theta\,\mathrm{d}x\\
    & - \displaystyle\int_\Omega \left(a(x) (u^*)^{-\gamma}+\lambda(u^*)^{r-1} \right)\Theta\,\mathrm{d}x\\
    =&\displaystyle\int_{\left\{u^*+\epsilon h\geq 0\right\}}\left(|\derivada u^*|^{p-2}\derivada u^*+ \mu(x)|\derivada u^*|^{q-2}\derivada u^*  \right)\derivada(u^*+\epsilon h)\,\mathrm{d}x\\
    &-\displaystyle\int_{\left\{u^*+\epsilon h\geq 0\right\}}\left(a(x) (u^*)^{-\gamma}+\lambda(u^*)^{r-1} \right)(u^*+\epsilon h)\,\mathrm{d}x\\
    =&\norma[\derivada u]{p}{L^{p}(\Omega)}+ \norma[\derivada u]{q}{L_{\mu}^{q}(\Omega)}-\displaystyle\int_{\Omega}[a(x) (u^*)^{-\gamma}+\lambda(u^*)^{r-1}]\,\mathrm{d}x-\lambda \norma[u]{r}{{L^{r}(\Omega)}}\\
    &+\epsilon \displaystyle\int_\Omega\left(|\derivada u^*|^{p-2}\derivada u^*+ \mu(x)|\derivada u^*|^{q-2}\derivada u^*  \right)\derivada h\,\mathrm{d}x\\
    &-\epsilon\displaystyle\int_{\Omega}\left(a(x) (u^*)^{-\gamma}+\lambda(u^*)^{r-1} \right) h\,\mathrm{d}x + \displaystyle\int_{\left\{u^*+\epsilon h<0\right\}}\left(a(x) (u^*)^{-\gamma}+\lambda(u^*)^{r-1} \right)(u^*+\epsilon h)\,\mathrm{d}x\\
    & - \displaystyle\int_{\{u^*+\epsilon h<0\} }\left(|\derivada u^*|^{p-2}\derivada u^*+ \mu(x)|\derivada u^*|^{q-2}\derivada u^*  \right)\derivada(u^*+\epsilon h)\,\mathrm{d}x\\
       \leq & \,\, \epsilon \displaystyle\int_\Omega\left(|\derivada u^*|^{p-2}\derivada u^*+ \mu(x)|\derivada u^*|^{q-2}\derivada u^*  \right)\derivada h\,\mathrm{d}x\\
    & - \epsilon\displaystyle\int_{\Omega}\left(a(x) (u^*)^{-\gamma}+\lambda(u^*)^{r-1} \right) h\,\mathrm{d}x\\
    & - \displaystyle\int_{\{u^*+\epsilon h<0\} }\left(|\derivada u^*|^{p-2}\derivada u^*+ \mu(x)|\derivada u^*|^{q-2}\derivada u^*  \right)\derivada  h\,\mathrm{d}x,
\end{split}
\end{equation*}}
since $u^*\in \mathbf{N}_\lambda^+$ and $u^*\geq 0$. Note that the measure of the domain $ \{u^*+\epsilon h<0\}$ tend to zero as $\epsilon \rightarrow 0$. Therefore,
\begin{equation*}
    \displaystyle\int_{\{u^*+\epsilon h<0\} }\left(|\derivada u^*|^{p-2}\derivada u^*+ \mu(x)|\derivada u^*|^{q-2}\derivada u^*  \right)\derivada h\,\mathrm{d}x\rightarrow 0  
\end{equation*}
as $\epsilon\rightarrow 0.$

Dividing by $\epsilon$ and letting $\epsilon\rightarrow 0$, we have
\begin{equation*}
\begin{split}
    0\leq &\displaystyle\int_\Omega\left(|\derivada u^*|^{p-2}\derivada u^*+ \mu(x)|\derivada u^*|^{q-2}\derivada u^*  \right)\derivada h\,\mathrm{d}x\\
     &-\displaystyle\int_{\Omega}\left(a(x) (u^*)^{-\gamma}+\lambda(u^*)^{r-1} \right) h\,\mathrm{d}x.
\end{split}
\end{equation*}

Since $h\in \hzero(\Omega)$ is arbitrary chosen, equality must hold. It follows that $u^*$ is weak solution of problem Eq.(\ref{eq1}) such that $\mathbf{E}_\lambda(u^*)<0$ (see Propositions \ref{p32} and \ref{p34}).
\end{proof}

\begin{proposition}\label{p38} Suppose that  {\rm(\bf H)} is satisfied. Then there exists $\hat{\lambda}^*_0\in (0,\hat{\lambda^*}]$ such that $\mathbf{E}_\lambda|_{\mathbf{N}_\lambda^-}\geq 0$ for all $\lambda \in (0,\hat{\lambda^*_0}]$.
\end{proposition}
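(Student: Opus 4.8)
The plan is to rewrite $\mathbf{E}_\lambda$ on the Nehari manifold so that the superlinear term disappears, then to show that membership in $\mathbf{N}_\lambda^-$ forces a large norm when $\lambda$ is small, and finally to feed this lower bound into a coercivity-type estimate.

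First, for $u\in\mathbf{N}_\lambda^-\subseteq\mathbf{N}_\lambda$ I would use the Nehari identity to substitute $\lambda\norma[u]{r}{L^r(\Omega)}=\norma[\derivada u]{p}{L^p(\Omega)}+\norma[\derivada u]{q}{L_\mu^q(\Omega)}-\into a(x)|u|^{1-\gamma}\mathrm{d}x$ into $\mathbf{E}_\lambda(u)$. Grouping the coefficients of each of the three quantities gives
\begin{equation*}
\mathbf{E}_\lambda(u)=\frac{r-p}{pr}\norma[\derivada u]{p}{L^p(\Omega)}+\frac{r-q}{qr}\norma[\derivada u]{q}{L_\mu^q(\Omega)}-\frac{r+\gamma-1}{r(1-\gamma)}\into a(x)|u|^{1-\gamma}\mathrm{d}x.
\end{equation*}
Since $1<p<q<r$ and $0<\gamma<1$, the first two coefficients are strictly positive while the third is strictly negative, so the only obstruction to $\mathbf{E}_\lambda(u)\geq0$ is the singular integral $\into a(x)|u|^{1-\gamma}\mathrm{d}x$. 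I emphasize that the purely algebraic constraints defining $\mathbf{N}_\lambda^-$ are \emph{not} sufficient to control this term, which is precisely why the smallness of $\lambda$ must enter.

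Second, I would extract a lower bound on the norm of elements of $\mathbf{N}_\lambda^-$. Starting from the defining strict inequality
\begin{equation*}
(p+\gamma-1)\norma[\derivada u]{p}{L^p(\Omega)}+(q+\gamma-1)\norma[\derivada u]{q}{L_\mu^q(\Omega)}<\lambda(r+\gamma-1)\norma[u]{r}{L^r(\Omega)},
\end{equation*}
I would bound the left-hand side below by $(p+\gamma-1)\rho^{\mathcal{H}}(\derivada u)$, use the continuous embedding $\hzero(\Omega)\hookrightarrow L^{r}(\Omega)$ from Proposition \ref{Proposition2.1} on the right, and compare $\rho^{\mathcal{H}}$ with powers of the norm via Proposition \ref{Proposition2.2}\,(iii)--(iv). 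Distinguishing the cases $\lVert u\rVert_{\hzero}\lessgtr 1$, this yields a bound of the form $\lVert u\rVert_{\hzero}\geq c_*\lambda^{-1/(r-p)}$, which blows up as $\lambda\to0^+$; in particular it forces $\lVert u\rVert_{\hzero}>1$ once $\lambda$ is sufficiently small.

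Third, I would combine the two steps. Bounding the gradient part below by $\rho^{\mathcal{H}}(\derivada u)\geq\lVert u\rVert_{\hzero}^{p}$ (valid up to norm equivalence since $\lVert u\rVert_{\hzero}>1$) and the singular part above by $\into a(x)|u|^{1-\gamma}\mathrm{d}x\leq c'\lVert u\rVert_{\hzero}^{1-\gamma}$ (H\"older together with the embedding into $L^{p^*}(\Omega)$, as in Eq.(\ref{eq310})), the identity of the first step gives $\mathbf{E}_\lambda(u)\geq c_1\lVert u\rVert_{\hzero}^{p}-c_2\lVert u\rVert_{\hzero}^{1-\gamma}$ for constants $c_1,c_2>0$ independent of $\lambda$. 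Because $p>1-\gamma$, the right-hand side is nonnegative as soon as $\lVert u\rVert_{\hzero}$ exceeds an explicit threshold. Choosing $\hat{\lambda}_0^*\in(0,\hat{\lambda}^*]$ small enough that $c_*\lambda^{-1/(r-p)}$ exceeds that threshold for all $\lambda\leq\hat{\lambda}_0^*$ then yields $\mathbf{E}_\lambda(u)\geq0$ for every $u\in\mathbf{N}_\lambda^-$, which is the assertion. The main obstacle is the second step: nonnegativity cannot come from the sign pattern of the energy identity alone, since the $\mathbf{N}_\lambda^-$ inequality points the wrong way once the weights $1/p,1/q,1/r$ are accounted for, so the proof genuinely relies on the norm blow-up as $\lambda\to0$, and some care is needed in the case distinction $\lVert u\rVert_{\hzero}\lessgtr 1$ when applying Proposition \ref{Proposition2.2}.
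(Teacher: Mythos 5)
Your proof is correct, but it follows a genuinely different route from the paper's. The paper argues by contradiction: assuming some $u\in\mathbf{N}_\lambda^-$ satisfies $\mathbf{E}_\lambda(u)\leq 0$, it uses the Nehari constraint to eliminate the term $\lVert\derivada u\rVert_{L_\mu^q}^q$ (rather than the term $\lambda\lVert u\rVert_{L^r(\Omega)}^r$, as you do), which combined with the H\"older bound on the singular integral yields an upper bound $\lVert u\rVert_{L^r(\Omega)}\leq c_{11}\lambda^{-1/(r+\gamma-1)}$; on the other hand, the defining inequality of $\mathbf{N}_\lambda^-$ together with the embedding into $L^r(\Omega)$ gives the lower bound $\lVert u\rVert_{L^r(\Omega)}\geq c_{12}\lambda^{-1/(r-p)}$, and since $r-p<r+\gamma-1$ these two bounds collide once $\lambda$ is small. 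Everything in the paper's proof is phrased in terms of the $L^r$-norm alone, so the modular--norm comparison of Proposition \ref{Proposition2.2} is never invoked at this stage. Your argument is instead direct: substituting away the $\lambda$-term recovers exactly the coercivity estimate of Proposition \ref{p31}, namely $\mathbf{E}_\lambda(u)\geq c_1\lVert u\rVert^{p}-c_2\lVert u\rVert^{1-\gamma}$ for large Musielak--Orlicz norm, and you then show that membership in $\mathbf{N}_\lambda^-$ pushes $\lVert u\rVert_{\hzero(\Omega)}$ above any fixed threshold as $\lambda\to 0^+$, which is where Proposition \ref{Proposition2.2}\,(iii)--(iv) and the case distinction $\lVert u\rVert\lessgtr 1$ enter (as you note, the case $\lVert u\rVert<1$ must first be ruled out for small $\lambda$ before the exponent $-1/(r-p)$ bound can be asserted). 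Both proofs ultimately pivot on the same inequality $p+\gamma-1>0$: in the paper it appears as the gap between the two $\lambda$-exponents $1/(r-p)$ and $1/(r+\gamma-1)$, in yours as the gap between the powers $p$ and $1-\gamma$ in the coercivity bound. What the paper's route buys is a self-contained computation involving only $L^r$-quantities; what yours buys is economy (it reuses the Proposition \ref{p31} estimate essentially verbatim), a direct rather than contradiction-based structure, and an explicit admissible threshold for $\hat{\lambda}_0^*$. Your closing observation that the sign pattern of the energy identity alone cannot suffice, and that the norm blow-up as $\lambda\to 0^+$ is the essential mechanism, is precisely what makes the paper's two incompatible bounds work as well.
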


\begin{proof} From Proposition \ref{p34}, we know that $\mathbf{N}_\lambda^-\neq \emptyset$. Let $u\in \mathbf{N}_\lambda^-$. By the definition of $\mathbf{N}_\lambda^-$ and the embedding $\hzero(\Omega)\rightarrow L^{r}(\Omega)$, yields
\begin{equation*}
  \begin{split}
        \lambda(r+\gamma-1)\norma[u]{r}{L^r(\Omega)} &> (p+\gamma-1)\norma[\derivada u]{p}{L^p(\Omega)} + (q+\gamma-1)\norma[\derivada u]{q}{L_{\mu}^q(\Omega)}\\
        & \geq (p+\gamma-1)\norma[\derivada u]{p}{L^p(\Omega)}\\
        & \geq (p+\gamma-1)c_9^p\norma[u]{p}{L^r(\Omega)},
  \end{split}
\end{equation*}
for some $c_9>0$. Therefore, we get
\begin{equation}\label{eq330}
    \norma[u]{r}{L^r(\Omega)}\geq \left(\frac{c_9^p(p+\gamma-1)}{\lambda(r+\gamma-1)}\right)^{\frac{1}{r-p}}.
\end{equation}

Arguing by contradiction, let us suppose that the proposition statement is not true. Then we can find $u\in \mathbf{N}_\lambda^-$ such that $\mathbf{E}_\lambda u \leq 0$, that is,
\begin{equation}\label{eq331}
\frac{1}{p}\norma[\derivada u]{p}{L^p} + \frac{1}{q}\norma[\derivada]{q}{L_{\mu}^q} - \frac{1}{1-\gamma}\displaystyle\int_{\Omega}a(x) |u^*|^{1-\gamma}\mathrm{d}x - \frac{\lambda}{r}\norma[u]{r}{L^r(\Omega)} \leq 0.
\end{equation}

Since $\mathbf{N}_\lambda^-\subseteq \mathbf{N}_\lambda$, one has
\begin{equation}\label{eq332}
\frac{1}{q}\norma[\derivada]{q}{L_{\mu}^q} = \frac{1}{q}\displaystyle\int_{\Omega}a(x) |u^*|^{1-\gamma}\mathrm{d}x + \frac{\lambda}{q}\norma[u]{r}{L^r(\Omega)}-\frac{1}{q}\norma[\derivada u]{p}{L^p}.
\end{equation}

Substituting Eq.(\ref{eq332}) in Eq.(\ref{eq331}), yields 
\begin{equation*}
\left(\frac{1}{p}-\frac{1}{q}\right)\norma[\derivada u]{p}{L^p} +  \left(\frac{1}{q}-\frac{1}{\gamma-1}\right)\displaystyle\int_{\Omega}a(x) |u^*|^{1-\gamma}\mathrm{d}x + \lambda  \left(\frac{1}{q}-\frac{1}{r}\right)\norma[u]{r}{L^r(\Omega)}\leq 0,
\end{equation*}
and gives that
\begin{equation*}
\lambda \frac{r-q}{qr}\norma[u]{r}{L^r(\Omega)}\leq \frac{q+\gamma-1}{q(1-\gamma)}\displaystyle\int_{\Omega}a(x) |u^*|^{1-\gamma}\mathrm{d}x\leq \frac{q+\gamma-1}{q(1-\gamma)}c_{10}\norma[ u]{1-\gamma}{L^r},
\end{equation*}
for some $c_{10}>0$. Thus
\begin{equation}\label{eq333}
\norma[u]{r}{L^r(\Omega)}\leq c_{11}\left(\frac{1}{\lambda}\right)^{\frac{1}{r+\gamma-1}},
\end{equation}
for some $c_{11}>0$. Now we use Eq.(\ref{eq333}) in Eq.(\ref{eq330}), in order to obtain
\begin{equation*}
    c_{12}\left(\frac{1}{\lambda}\right)^{\frac{1}{r-p}}\leq c_{11}\left(\frac{1}{\lambda}\right)^{\frac{1}{r+\gamma-1}}, \mbox{ with } c_{12}= \left(\frac{c_9^p(p+\gamma-1)}{r+\gamma-1}\right)^{\frac{1}{r-p}}>0.
\end{equation*}

Consequently,
\begin{equation*}
    \begin{split}
        0<\frac{c_{12}}{c_{11}}&\leq \lambda^{\frac{1}{r-p}-\frac{1}{r+\gamma-1}}\\
        & = \lambda^{\frac{p+\gamma-1}{(r-p)(r+\gamma-1)}} \rightarrow 0 \mbox{ as } \lambda\rightarrow0^+,
    \end{split}
\end{equation*}
since $1<p<r$ and $\gamma\in (0,1)$, a contradiction. Thus, we can find $\hat{\lambda}^*_0\in (0, \hat{\lambda}^*]$ such that $\mathbf{E}_\lambda|_{\mathbf{N}_\lambda^-}> 0$ for all $\lambda \in (0,\hat{\lambda^*_0}]$.
\end{proof}

\begin{proposition}\label{p39} Suppose that {\rm(\bf H)} is satisfied and  consider $\lambda \in (0,\hat{\lambda}_0^* ]$. Then, there exists $v^*\in \mathbf{N}_\lambda^-$, with $v^*\geq 0$, such that
\begin{equation*}
    m_\lambda^- = \inf_{\mathbf{N}_\lambda^-}\mathbf{E}_\lambda = \mathbf{E}_\lambda(v^*)>0.
\end{equation*}
\end{proposition}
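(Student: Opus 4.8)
The plan is to realize $v^*$ as a minimizer of $\mathbf{E}_\lambda$ over the local-maximum branch $\mathbf{N}_\lambda^-$ by the direct method, adapting the fibering analysis of Proposition \ref{p34}. First I would fix $\lambda\in(0,\hat{\lambda}_0^*]$ and choose a minimizing sequence $\{v_n\}_{n\in\N}\subset\mathbf{N}_\lambda^-$, so that $\mathbf{E}_\lambda(v_n)\searrow m_\lambda^-$. Since $\mathbf{N}_\lambda^-\subset\mathbf{N}_\lambda$, the coercivity in Proposition \ref{p31} shows $\{v_n\}$ is bounded in $\hzero(\Om)$; by reflexivity and the compact embedding of Proposition \ref{Proposition2.1}(iii), after extracting a subsequence I may assume $v_n\weak v^*$ in $\hzero(\Om)$, $v_n\to v^*$ in $L^r(\Om)$, and, because the singular exponent $1-\gamma$ lies in the compact range and $a\in L^{\infty}(\Om)$, also $\into a(x)|v_n|^{1-\gamma}\mathrm{d}x\to\into a(x)|v^*|^{1-\gamma}\mathrm{d}x$.

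Next I would check $v^*\neq 0$. Each $v_n\in\mathbf{N}_\lambda^-$, so the uniform lower bound Eq.(\ref{eq330}) established in Proposition \ref{p38} gives that $\norma[v_n]{r}{L^r(\Om)}$ is bounded below by a positive constant independent of $n$; passing to the limit with the strong $L^r$ convergence forces $\norma[v^*]{r}{L^r(\Om)}>0$, hence $v^*\neq 0$. With $v^*\neq0$ and $\lambda\le\hat{\lambda}^*$, the fibering construction of Proposition \ref{p34} applies to $v^*$: there is a (unique) $t_2=t_2(v^*)>0$ with $t_2v^*\in\mathbf{N}_\lambda^-$, and $t=t_2$ is the global maximizer of $t\mapsto\mathbf{E}_\lambda(tv^*)$ on $[0,\infty)$ (the fiber equals $0$ at $t=0$, decreases to a negative local minimum, rises to the positive local maximum at $t_2$, and then tends to $-\infty$ because $1-\gamma<p<q<r$).

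The core of the argument is to prove $\mathbf{E}_\lambda(t_2v^*)=m_\lambda^-$. Evaluating the energy at the \emph{fixed} fiber parameter $t_2$ and combining the weak lower semicontinuity of $u\mapsto\norma[\derivada u]{p}{L^p(\Om)}$ and $u\mapsto\norma[\derivada u]{q}{L_\mu^q(\Om)}$ with the strong convergence of the $L^r$ and singular terms yields
\begin{equation*}
\mathbf{E}_\lambda(t_2v^*)\le\limi\mathbf{E}_\lambda(t_2v_n).
\end{equation*}
On the other hand, since $v_n\in\mathbf{N}_\lambda^-$ the value $t=1$ maximizes $t\mapsto\mathbf{E}_\lambda(tv_n)$ on $[0,\infty)$, so $\mathbf{E}_\lambda(t_2v_n)\le\mathbf{E}_\lambda(v_n)$; together these give $\mathbf{E}_\lambda(t_2v^*)\le\limi\mathbf{E}_\lambda(v_n)=m_\lambda^-$. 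As $t_2v^*\in\mathbf{N}_\lambda^-$ we also have $\mathbf{E}_\lambda(t_2v^*)\ge m_\lambda^-$, whence equality. Relabelling $v^*:=t_2v^*$, this $v^*\in\mathbf{N}_\lambda^-$ attains the infimum. Finally, replacing $v^*$ by $|v^*|$ leaves $\norma[\derivada v^*]{p}{L^p(\Om)}$, $\norma[\derivada v^*]{q}{L_\mu^q(\Om)}$, $\into a(x)|v^*|^{1-\gamma}\mathrm{d}x$ and $\norma[v^*]{r}{L^r(\Om)}$ unchanged, so I may take $v^*\ge 0$ still in $\mathbf{N}_\lambda^-$ with $\mathbf{E}_\lambda(v^*)=m_\lambda^-$; the strict bound $m_\lambda^->0$ is then immediate from Proposition \ref{p38}, which yields $\mathbf{E}_\lambda(v^*)>0$.

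I expect the main obstacle to be the local-maximum nature of $\mathbf{N}_\lambda^-$: plain lower semicontinuity of $\mathbf{E}_\lambda$ gives only $\mathbf{E}_\lambda(v^*)\le m_\lambda^-$ and does not place the weak limit on $\mathbf{N}_\lambda^-$. The resolving device is to test lower semicontinuity at the fixed projection parameter $t_2=t_2(v^*)$ while using that each $v_n$ already sits at the top of its own fiber; this is precisely where the uniform fibering structure of Proposition \ref{p34} and the uniform $L^r$-lower bound of Proposition \ref{p38} enter. A secondary point deserving care is the convergence of the singular term $\into a(x)|v_n|^{1-\gamma}\mathrm{d}x$, which I would justify through a.e. convergence along a subsequence together with the compact-embedding bound, since $u\mapsto|u|^{1-\gamma}$ is only Hölder rather than Lipschitz.
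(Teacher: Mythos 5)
Your proposal is correct, but it reaches the minimizer by a genuinely different mechanism than the paper. The paper's proof (which it runs ``as in Proposition \ref{p34}'') establishes \emph{strong} convergence of the minimizing sequence: it assumes by contradiction that $\liminf_{n}\rho^{\mathcal{H}}(\derivada v_n)>\rho^{\mathcal{H}}(\derivada v^*)$, derives from the fibering function evaluated at the projection point $t_2$ a violation of the minimality of $m_\lambda^-$, then upgrades modular convergence to norm convergence via Proposition \ref{Proposition2.2}(v), so that the weak limit $v^*$ itself lies in $\mathbf{N}_\lambda^-$ (membership being forced by Propositions \ref{p33} and \ref{p38}) and attains $m_\lambda^-$. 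You instead never prove strong convergence: you project the weak limit back onto the manifold, compare $\mathbf{E}_\lambda(t_2v^*)\le\limi\mathbf{E}_\lambda(t_2v_n)\le\limi\mathbf{E}_\lambda(v_n)=m_\lambda^-$ using weak lower semicontinuity at the fixed parameter $t_2$, and relabel $v^*:=t_2v^*$. This is the classical Tarantello-style projection argument; it is more direct (no contradiction, no modular-to-norm machinery) at the cost of delivering the minimizer only as a rescaling of the weak limit rather than the limit itself, and of not yielding the strong convergence information the paper's route provides. One point you should make explicit: your claim that $t=1$ globally maximizes $t\mapsto\mathbf{E}_\lambda(tv_n)$ on $[0,\infty)$ does not follow from $v_n\in\mathbf{N}_\lambda^-$ alone, since membership only makes $t=1$ the \emph{local} maximum of a fiber that starts at the value $0$ at $t=0$; you need $\mathbf{E}_\lambda(v_n)\ge 0$, i.e.\ Proposition \ref{p38} and hence the restriction $\lambda\in(0,\hat{\lambda}_0^*]$, to guarantee the local maximum beats the value at $t=0$ (alternatively, the weaker estimate $\mathbf{E}_\lambda(t_2v_n)\le\max\{0,\mathbf{E}_\lambda(v_n)\}$ combined with $m_\lambda^-\ge 0$ closes the same gap). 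Finally, your replacement of $v^*$ by $|v^*|$ tacitly uses $|\derivada|v^*||=|\derivada v^*|$, which is delicate for fractional operators, but the paper makes exactly the same move, so this is not a gap relative to its own standard of rigor.
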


\begin{proof} The proof is similar to the proof of Proposition \ref{p34}. If $\{v_n\}_{n\in\mathbb{N}}\subset\mathbf{N}_\lambda^-\subset \mathbf{N}_\lambda $ is a minimizing sequence, from Proposition \ref{p31}, we know that $\{u_n\}_{n\in \mathbb{N}}\subset \hzero(\Omega)$ is bounded. Hence, we may assume that
\begin{equation*}
    v_n\rightarrow v^* \mbox{ in } \hzero(\Omega) \,\mbox{ and }\, v_n\rightarrow v^* \mbox{ in } L^r(\Omega).
\end{equation*}

Note that $v^{*}\neq 0$ by Eq.(\ref{eq330}). Now we use the point $t_2>0$ (see Eq.(\ref{eq312})), its follows that
\begin{equation*}
\Phi_{v^*}(t_2) = \lambda \norma[v^*]{r}{L^r(\Omega)} \,\mbox{ and } \,\Phi_{v^*}'(t_2)<0.
\end{equation*}

As in the proof of Proposition \ref{p34}, by applying Proposition \ref{p38}, we conclude that 
\begin{equation*}
v^*\in \mathbf{N}_\lambda^-, \, v^*\geq 0 \, \mbox{ and } \, m_\lambda^- = \mathbf{E}_\lambda(v^*)>0.
\end{equation*}
\end{proof}

\begin{proposition}\label{p310} Suppose that  {\rm(\bf H)} is satisfied and let $h\in\hzero(\Omega)$ and  $\lambda\in (0,\hat{\lambda}^*]$. Then there exists $b>0$ such that $\mathbf{E}_\lambda(v^*)\leq \mathbf{E}_\lambda(v^*+th)$, for all $t\in[0,b]$.
\end{proposition}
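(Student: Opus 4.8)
The plan is to mirror the proof of Proposition~\ref{p36}, now anchored at the minimizer $v^*\in\mathbf{N}_\lambda^-$ produced in Proposition~\ref{p39}. First I would introduce $\eta_h:[0,+\infty)\to\mathbb{R}$ by
\begin{equation*}
\begin{split}
\eta_h(t) =& (p-1)\norma[\derivada v^*+t\derivada h]{p}{L^{p}(\Omega)} + (q-1)\norma[\derivada v^*+t\derivada h]{q}{L_{\mu}^{q}(\Omega)}\\
& + \gamma\into a(x)|v^*+th|^{1-\gamma}\mathrm{d}x - \lambda(r-1)\norma[v^*+th]{r}{L^r(\Omega)},
\end{split}
\end{equation*}
and, using $v^*\in\mathbf{N}_\lambda^-\subseteq\mathbf{N}_\lambda$ to eliminate the singular integral exactly as in Eq.~(\ref{eq323}), identify $\eta_h(0)$ with the fiber second derivative $w''_{v^*}(1)$ up to a positive factor. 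The first structural difference from Proposition~\ref{p36} already surfaces here: since $v^*\in\mathbf{N}_\lambda^-$ we have $\eta_h(0)=w''_{v^*}(1)<0$, the sign opposite to the $\mathbf{N}_\lambda^+$ case.

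Next, because $\mathbf{N}_\lambda^0=\emptyset$ for $\lambda\in(0,\lambda^*)$ (Proposition~\ref{p33}), Lemma~\ref{l35} applies to $v^*\in\mathbf{N}_\lambda^-$ and furnishes $\epsilon>0$ and a continuous $\overline{\delta}:B_\epsilon(0)\to(0,+\infty)$ with $\overline{\delta}(0)=1$ and $\overline{\delta}(t)(v^*+th)\in\mathbf{N}_\lambda^-$ for all small $t\geq0$, where $\overline{\delta}(t)\to1$ as $t\to0^+$. Since $v^*$ realizes $m_\lambda^-=\inf_{\mathbf{N}_\lambda^-}\mathbf{E}_\lambda$ (Proposition~\ref{p39}) and the projected point lies in $\mathbf{N}_\lambda^-$, I would record, for some $b_0>0$,
\begin{equation*}
\mathbf{E}_\lambda(v^*)=m_\lambda^-\leq \mathbf{E}_\lambda\big(\overline{\delta}(t)(v^*+th)\big),\qquad t\in[0,b_0].
\end{equation*}
This is the precise analogue of the inequality that drives Proposition~\ref{p36}.

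The decisive step is to transfer this bound from $\overline{\delta}(t)(v^*+th)$ back to $v^*+th$, i.e. to compare $\mathbf{E}_\lambda$ at the fiber parameters $s=\overline{\delta}(t)$ and $s=1$ along the ray through $v^*+th$. In Proposition~\ref{p36} this is immediate because $w''_{u^*}(1)>0$ persists for $u^*+th$, making the projection a fiberwise \emph{minimum}, so that $\mathbf{E}_\lambda(u^*+th)\geq\mathbf{E}_\lambda(\overline{\delta}(t)(u^*+th))\geq m_\lambda^+$. Here the sign $w''_{v^*}(1)<0$ makes the projection a fiberwise \emph{maximum} for small $t$. Writing $g(s):=\mathbf{E}_\lambda(s(v^*+th))$ and using $g'(\overline{\delta}(t))=0$ together with $\overline{\delta}(t)\to1$, a second-order expansion gives
\begin{equation*}
\mathbf{E}_\lambda\big(\overline{\delta}(t)(v^*+th)\big)-\mathbf{E}_\lambda(v^*+th)=-\tfrac12\,w''_{v^*+th}(\xi)\,(\overline{\delta}(t)-1)^2\ \geq\ 0
\end{equation*}
for some intermediate $\xi$, while the projection bound reads $\mathbf{E}_\lambda(\overline{\delta}(t)(v^*+th))-\mathbf{E}_\lambda(v^*)\geq0$. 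Thus both $\mathbf{E}_\lambda(v^*)$ and $\mathbf{E}_\lambda(v^*+th)$ sit below the projection value, and the target inequality $\mathbf{E}_\lambda(v^*)\leq\mathbf{E}_\lambda(v^*+th)$ is equivalent to the first gap being no larger than the second. Overcoming this reversed curvature is the main obstacle. The route I would take is to quantify $\overline{\delta}(t)-1$ precisely from the defining relation $\overline{\delta}(t)(v^*+th)\in\mathbf{N}_\lambda^-$, and then to split $h$ into its $v^*$-radial part (where the Nehari identity forces the first-order term to vanish and $w''_{v^*}(1)<0$ governs the second-order behaviour) and its transverse part (where the strictly convex contribution of the $p$- and $q$-homogeneous terms enters with a favourable sign), balancing the transverse convex gain against the radial second-order loss so that the two gaps compare correctly for $t\in[0,b]$ with $b\in(0,b_0]$ depending on $h$. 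Carrying out this second-order comparison uniformly enough to extract a single $b>0$ is where the argument must work hardest.
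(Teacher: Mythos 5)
Your skeleton does match the paper's intent: the paper's entire proof of Proposition \ref{p310} is the single sentence that one argues analogously to Proposition \ref{p36}, replacing $u^*$ by $v^*$ in the definition of $\eta_h$ and invoking Lemma \ref{l35}; your opening steps ($\eta_h$ anchored at $v^*$ with $\eta_h(0)=w''_{v^*}(1)<0$, the projection $\overline{\delta}(t)(v^*+th)\in\mathbf{N}_\lambda^-$, and the minimality bound $m_\lambda^-\leq \mathbf{E}_\lambda(\overline{\delta}(t)(v^*+th))$) are exactly that adaptation. You are also right, and more careful than the paper, that the analogy breaks at the transfer step: in Proposition \ref{p36} the projection parameter is a fiberwise \emph{minimum} near $s=1$, so $\mathbf{E}_\lambda(\overline{\delta}(t)(u^*+th))\leq \mathbf{E}_\lambda(u^*+th)$ closes the chain, whereas on $\mathbf{N}_\lambda^-$ the projection is a fiberwise \emph{maximum} and that inequality reverses. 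The paper's one-line proof silently inherits this defect from the literature it follows.

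However, your repair plan is a genuine gap, not a proof, and it cannot be carried out in the stated generality: the radial direction is an outright counterexample. Take $h=v^*$. Then $\mathbf{E}_\lambda(v^*+th)=w_{v^*}(1+t)$, and since $v^*\in\mathbf{N}_\lambda^-$ gives $w'_{v^*}(1)=0$ and $w''_{v^*}(1)<0$, a Taylor expansion yields $w_{v^*}(1+t)<w_{v^*}(1)=\mathbf{E}_\lambda(v^*)$ for all small $t>0$, so no $b>0$ exists for this $h$. In your proposed radial/transverse splitting this is precisely the case where the transverse convex gain vanishes and only the radial second-order loss remains, so the balancing you hope for is impossible; the uniform-in-$h$ extraction of $b$ that you flag as the hardest step is in fact unattainable. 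What survives, and is all that the proof of Proposition \ref{p311} actually needs, is the first-order version: writing $z_t=v^*+th$, your own expansion gives $w_{z_t}(\overline{\delta}(t))-w_{z_t}(1)=O\big((\overline{\delta}(t)-1)^2\big)$, whence
\[
\mathbf{E}_\lambda(v^*+th)\;\geq\;\mathbf{E}_\lambda(v^*)-C\,(\overline{\delta}(t)-1)^2,
\]
and therefore $\liminf_{t\to 0^+} t^{-1}\big(\mathbf{E}_\lambda(v^*+th)-\mathbf{E}_\lambda(v^*)\big)\geq 0$, \emph{provided} one proves $\overline{\delta}(t)-1=o(\sqrt{t})$ — continuity of $\overline{\delta}$ from Lemma \ref{l35} alone does not give this rate, so some differentiability of the implicit function in $t$ must be established. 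In short: you correctly diagnosed the flaw that the paper glosses over, but your attempt does not prove the proposition, and as stated for arbitrary $h\in\hzero(\Omega)$ the proposition admits the radial counterexample above; only the weakened directional statement can be salvaged.
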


\begin{proof} The proof follows analogously to the one given in the Proposition \ref{p36}, replacing $u^*$ by $v^*$ in the definition of $\eta_h$ and using Lemma \ref{l35}.
\end{proof}

Now we have a second weak solution of problem Eq.(\ref{eq1}).

\begin{proposition}\label{p311} Let hypotheses {\rm(\bf H)} be satisfied and let $\lambda\in (0,\hat{\lambda}_0^*]$. Then $v^*$ is a weak solution of problem {\rm(\ref{eq1})} such that $\mathbf{E}_\lambda(v^*)>0$.
\end{proposition}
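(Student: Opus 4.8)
The plan is to mirror the proof of Proposition~\ref{p37} almost verbatim, with $u^*$ replaced by $v^*$ throughout and with Proposition~\ref{p310} taking the role of Proposition~\ref{p36}. The starting data come from Proposition~\ref{p39}: the infimum $m_\lambda^-=\inf_{\mathbf{N}_\lambda^-}\mathbf{E}_\lambda$ is attained at some $v^*\in\mathbf{N}_\lambda^-$ with $v^*\geq 0$ and $\mathbf{E}_\lambda(v^*)=m_\lambda^->0$. This last inequality is exactly the energy claim $\mathbf{E}_\lambda(v^*)>0$, so the whole of the work lies in showing that $v^*$ solves Eq.~(\ref{eq1}) weakly.

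First I would fix an arbitrary $0\leq\overline{h}\in\hzero(\Omega)$. By Proposition~\ref{p310} there is $b>0$ with $\mathbf{E}_\lambda(v^*)\leq\mathbf{E}_\lambda(v^*+t\overline{h})$ for every $t\in[0,b]$; rewriting this difference as in Eq.~(\ref{eq326}), dividing by $t$ and letting $t\to 0^+$ yields the analogue of Eq.~(\ref{eq327}). The singular term is treated exactly as before: because $a(x)(v^*+\epsilon t\overline{h})^{-\gamma}\overline{h}\geq 0$ and converges a.e.\ to $a(x)(v^*)^{-\gamma}\overline{h}$ as $t\to 0^+$, Fatou's Lemma shows that $a(x)(v^*)^{-\gamma}\overline{h}$ is integrable and provides the counterpart of Eq.~(\ref{eq328}). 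Combining the two gives the one-sided variational inequality
\begin{equation*}
0\leq\into\left(|\derivada v^*|^{p-2}\derivada v^*+\mu(x)|\derivada v^*|^{q-2}\derivada v^*\right)\derivada\overline{h}\,\mathrm{d}x-\into\left(a(x)(v^*)^{-\gamma}+\lambda(v^*)^{r-1}\right)\overline{h}\,\mathrm{d}x,
\end{equation*}
valid for all $\overline{h}\geq 0$, i.e.\ the analogue of Eq.~(\ref{eq329}).

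To upgrade this to an equality valid for arbitrary, possibly sign-changing, $h\in\hzero(\Omega)$, I would test with $\Theta=(v^*+\epsilon h)^+$, split $\Omega$ into $\{v^*+\epsilon h\geq 0\}$ and $\{v^*+\epsilon h<0\}$, and expand in powers of $\epsilon$. The decisive cancellation is that the order-one (non-$\epsilon$) bulk term collapses to
\begin{equation*}
\norma[\derivada v^*]{p}{L^p(\Omega)}+\norma[\derivada v^*]{q}{L_\mu^q(\Omega)}-\into a(x)(v^*)^{1-\gamma}\,\mathrm{d}x-\lambda\norma[v^*]{r}{L^r(\Omega)}=0,
\end{equation*}
the equality being the Nehari identity for $v^*$. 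Since $\meas\{v^*+\epsilon h<0\}\to 0$ as $\epsilon\to 0^+$, the residual boundary-layer integrals vanish in the limit; dividing by $\epsilon$ and sending $\epsilon\to 0^+$ gives the inequality with $h$, and then replacing $h$ by $-h$ forces equality. Hence $v^*$ satisfies the weak formulation for all test functions, which together with $\mathbf{E}_\lambda(v^*)=m_\lambda^->0$ completes the argument.

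The one point that genuinely requires attention—and the reason this is not a purely mechanical transcription—is the cancellation above: it rests on the identity $\norma[\derivada v^*]{p}{L^p}+\norma[\derivada v^*]{q}{L_\mu^q}=\into a(x)(v^*)^{1-\gamma}\,\mathrm{d}x+\lambda\norma[v^*]{r}{L^r}$, which holds for every member of the full manifold $\mathbf{N}_\lambda$ and is therefore \emph{insensitive to the sign} in $\mathbf{N}_\lambda^{\pm}$. Consequently the computation in Proposition~\ref{p37}, although phrased for $u^*\in\mathbf{N}_\lambda^+$, uses only $u^*\in\mathbf{N}_\lambda$ at this step and transfers unchanged to $v^*\in\mathbf{N}_\lambda^-\subset\mathbf{N}_\lambda$. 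A final remark worth recording is that the Fatou step already forces $a(x)(v^*)^{-\gamma}$ to be integrable against nonnegative test functions; since $a>0$ a.e., this rules out $v^*$ vanishing on a set of positive measure, so in fact $v^*>0$ a.e.\ in $\Omega$ and $v^*$ qualifies as a weak solution in the sense defined in the introduction.
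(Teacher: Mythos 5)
Your proposal is correct and takes essentially the same approach as the paper: the paper's own proof of Proposition~\ref{p311} is a one-line reference saying it follows as in Proposition~\ref{p37} by applying Propositions~\ref{p310} and~\ref{p39}, which is precisely the argument you have written out in detail. Your two added observations — that the decisive cancellation uses only the Nehari identity for membership in $\mathbf{N}_\lambda$ (hence is insensitive to the sign distinguishing $\mathbf{N}_\lambda^{\pm}$), and that the Fatou step forces $v^*>0$ a.e.\ so that $v^*$ meets the paper's definition of weak solution — are sound and fill in details the paper leaves implicit.
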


\begin{proof}
The proof can be done as the proof of Proposition \ref{p37}, by applying Propositions \ref{p310} and \ref{p39}.
\end{proof}

\begin{proof}{\bf (Theorem \ref{prin})}.
The proof follows of Propositions \ref{p310} and \ref{p311}.
\end{proof}
\section{Conclusion and future work}

\textcolor{blue}{The results obtained in this present paper certainly contribute directly to the theory of fractional operators, in particular, to fractional partial differential equations. However, there are still numerous issues that need to be addressed, and challenges that surround fractional operators involving functions with respect to $\psi(\cdot)$. Although the results here were successfully obtained, during the process, one of the challenges is to obtain estimates for the $\psi$-Hilfer fractional operator, which was not a simple task. However, as we have already discussed in other works with problems of fractional differential equations of the $p$-Laplacian type, some paths are already known. On the other hand, as highlighted in the introduction, here we work in the weightless $(\psi,\mathcal{H})$-fractional space, which does not allow us to discuss the particular case for $\psi(x)=x$. However, this is possible just by taking the space $\mathbb{H}_{\mathcal{H},0}^{\alpha,\beta;\psi}(\Omega)$ with weight. As a natural continuation of this work and with the aim of trying to expand and contribute to the area, there are some questions that motivate us to continue in this research project, as highlighted below, that is:}
\begin{itemize}
    \item 
    \textcolor{blue}{ Taking the problem Eq.(\ref{eq1}), we can work with the $p(x)$-Laplacian has a more complex nonlinearity that raises some of the essential difficulties, for example, it is inhomogeneous.}

    \item
    \textcolor{blue}{ Motivated by the problem Eq.(\ref{eq1}), we can infer conditions and make a connection with a Kirchhoff-type problem.}

\end{itemize}

\textcolor{blue}{We hope in the near future to start research addressing these issues. Little by little, we believe that this theory of fractional operators in the field of partial differential equations involving $p$-Laplacian is growing and being consolidated.}

\section{Declarations}

{\bf Conflict of interest} The authors declare that they have no conflict of interest.

\section*{Acknowledgements}
\textcolor{blue}{ All authors’ contributions to this manuscript are the same. All authors read and approved the final manuscript. We are very grateful to the anonymous reviewers for their useful comments that led to improvement of the manuscript.}


\end{document}